\DeclarePairedDelimiter\ceil{\lceil}{\rceil}
\newtheorem{lem}{Lemma}
\newtheorem{theorem}{Theorem}
\newtheorem{assump}{Assumption}
\newtheorem{rem}{Remark}
\newtheorem{prop}{Proposition}
\newcommand{\vast}{\bBigg@{4}}
\newcommand{\Vast}{\bBigg@{5}}
\def\mbb{\mathbb}
\def\mb{\mathbf}
\def\mc{\mathcal}
\begin{document}
	\title{On the linear convergence of distributed optimization over directed graphs} 
	\author{Chenguang Xi, and Usman A. Khan$^\dagger$
		\thanks{
			$^\dagger$C.~Xi and U.~A.~Khan are with the Department of Electrical and Computer Engineering, Tufts University, 161 College Ave, Medford, MA 02155; {\texttt{chenguang.xi@tufts.edu, khan@ece.tufts.edu}}. This work has been partially supported by an NSF Career Award \# CCF-1350264.}
	}

\maketitle
\begin{abstract}
This paper develops a fast distributed algorithm, termed \emph{DEXTRA}, to solve the optimization problem when~$n$ agents reach agreement and collaboratively minimize the sum of their local objective functions over the network, where the communication between the agents is described by a~\emph{directed} graph. Existing algorithms solve the problem restricted to directed graphs with convergence rates of $O(\ln k/\sqrt{k})$ for general convex objective functions and $O(\ln k/k)$ when the objective functions are strongly-convex, where~$k$ is the number of iterations.  We show that, with the appropriate step-size, DEXTRA converges at a linear rate $O(\tau^{k})$ for $0<\tau<1$, given that the objective functions are restricted strongly-convex. The implementation of DEXTRA requires each agent to know its local out-degree. Simulation examples further illustrate our findings.
\end{abstract}

\begin{IEEEkeywords}
Distributed optimization; multi-agent networks; directed graphs.
\end{IEEEkeywords}

\section{Introduction}\label{s1}
Distributed computation and optimization have gained great interests due to their widespread applications in, e.g., large-scale machine learning,~\cite{ml,distributed_Boyd}, model predictive control,~\cite{distributed_Necoara}, cognitive networks,~\cite{distributed_Mateos,distributed_Bazerque}, source localization,~\cite{distributed_Rabbit,distributed_Khan}, resource scheduling,~\cite{distributed_Chunlin}, and message routing,~\cite{distributed_Neglia}. All of these applications can be reduced to variations of distributed optimization problems by a network of agents when the knowledge of objective functions is distributed over the network. In particular, we consider the problem of minimizing a sum of objectives,~$\sum_{i=1}^{n}f_i(\mb{x})$, where~$f_i:\mbb{R}^p\rightarrow\mbb{R}$ is a private objective function at the $i$th agent of the network.

There are many algorithms to solve the above problem in a distributed manner. A few notable approaches are Distributed Gradient Descent (DGD),~\cite{uc_Nedic,DGD_Yuan}, Distributed Dual Averaging (DDA),~\cite{cc_Duchi}, and the distributed implementations of the Alternating Direction Method of Multipliers (ADMM),~\cite{ADMM_Mota,ADMM_Wei, ADMM_Shi}. The algorithms, DGD and DDA, are essentially gradient-based, where at each iteration a gradient-related step is calculated, followed by averaging over the neighbors in the network. The main advantage of these methods is computational simplicity. However, their convergence rate is slow due to the diminishing step-size, which is required to ensure exact convergence. The convergence rate of DGD and DDA with a diminishing step-size is shown to be~$O(\frac{\ln k}{\sqrt{k}})$,~\cite{uc_Nedic}; under a constant step-size, the algorithm accelerates to~$O(\frac{1}{k})$ at the cost of inexact convergence to a neighborhood of the optimal solution,~\cite{DGD_Yuan}. To overcome such difficulties, some alternate approaches include the Nesterov-based methods, e.g., Distributed Nesterov Gradient (DNG) with a convergence rate of~$O(\frac{\ln k}{k})$, and Distributed Nesterov gradient with Consensus iterations (DNC),~\cite{fast_Gradient}. The algorithm, DNC, can be interpreted to have an inner loop, where information is exchanged, within every outer loop where the optimization-step is performed. The time complexity of the outer loop is~$O(\frac{1}{k^2})$ whereas the inner loop performs a substantial $O(\ln k)$ information exchanges within the~$k$th outer loop. Therefore, the equivalent convergence rate of DNC is $O(\frac{\ln k}{k^2})$. Both DNG and DNC assume the gradient to be bounded and Lipschitz continuous at the same time. The discussion of convergence rate above applies to general convex functions. When the objective functions are further strongly-convex, DGD and DDA have a faster convergence rate of $O(\frac{\ln k}{k})$, and DGD with a constant step-size converges linearly to a neighborhood of the optimal solution. See Table~\ref{table:alg} for a comparison of related algorithms.

Other related algorithms include the distributed implementation of ADMM, based on augmented Lagrangian, where at each iteration the primal and dual variables are solved to minimize a Lagrangian-related function,~\cite{ADMM_Mota,ADMM_Wei, ADMM_Shi}. Comparing to the gradient-based methods with diminishing step-sizes, this type of method converges exactly to the optimal solution with a faster rate of~$O(\frac{1}{k})$ owing to the constant step-size; and further has a linear convergence when the objective functions are strongly-convex. However, the disadvantage is a high computation burden because each agent needs to optimize a subproblem at each iteration. To resolve this issue, Decentralized Linearized ADMM (DLM),~\cite{DLM}, and EXTRA,~\cite{EXTRA}, are proposed, which can be considered as a first-order approximation of decentralized ADMM. DLM and EXTRA converge at a linear rate if the local objective functions are  strongly-convex. All these distributed algorithms,~\cite{uc_Nedic,DGD_Yuan,cc_Duchi,ADMM_Mota,ADMM_Wei, ADMM_Shi,fast_Gradient,NN,DLM,EXTRA}, assume the multi-agent network to be an undirected graph. In contrast, literature concerning directed graphs is relatively limited. The challenge lies in the imbalance caused by the asymmetric information exchange in directed graphs.

We report the papers considering directed graphs here. Broadly, there are three notable approaches, which are all gradient-based algorithms with diminishing step-sizes. The first is called Gradient-Push (GP),~\cite{opdirect_Nedic,opdirect_Tsianous,opdirect_Tsianous2,opdirect_Tsianous3}, which combines gradient-descent and push-sum consensus. The push-sum algorithm,~\cite{ac_directed0,ac_directed}, is first proposed in consensus problems to achieve average-consensus\footnote{See~\cite{c_Jadbabaie,c_Saber,c_Saber2,c_Xiao}, for additional information on average-consensus problems.} in a directed graph, i.e., with a column-stochastic matrix. The idea is based on computing the stationary distribution of the column-stochastic matrix characterized by the underlying multi-agent network and canceling the imbalance by dividing with the right eigenvector of the column-stochastic matrix. Directed-Distributed Gradient Descent (D-DGD),~\cite{D-DGD,D-DPS}, follows the idea of Cai and Ishii's work on average-consensus,~\cite{ac_Cai1}, where a new non-doubly-stochastic matrix is constructed to reach average-consensus. The underlying weighting matrix contains a row-stochastic matrix as well as a column-stochastic matrix, and provides some nice properties similar to doubly-stochastic matrices. In~\cite{opdirect_Makhdoumi}, where we name the method Weight-Balancing-Distributed Gradient Descent (WB-DGD), the authors combine the weight-balancing technique,~\cite{c_Hooi-Tong}, together with gradient-descent. These gradient-based methods,~\cite{opdirect_Nedic,opdirect_Tsianous,opdirect_Tsianous2,opdirect_Tsianous3,D-DGD,D-DPS,opdirect_Makhdoumi}, restricted by the diminishing step-size, converge relatively slow at~$O(\frac{\ln k}{\sqrt{k}})$. Under strongly-convex objective functions, the convergence rate of GP can be accelerated to $O(\frac{\ln k}{k})$,~\cite{opdirect_Nedic3}. We sum up the existing first-order distributed algorithms and provide a comparison in terms of speed, in Table~\ref{table:alg}, including both undirected and directed graphs. In Table~\ref{table:alg},~`I' means DGD with a constant step-size is an Inexact method, and~`C' represents that DADMM has a much higher Computation burden compared to other first-order methods.
\begin{table}[!h]
	\begin{center}
		\begin{tabular}{c|c|c|c|c|}
			\cline{2-4}
			& Algorithms & General Convex & strongly-convex   \\
			\cline{1-4}
			\multicolumn{1}{ |c|  }{\multirow{8}{*}{undirected} } &
			\multicolumn{1}{ l| } {DGD~($\alpha_k$)} & $O(\frac{\ln k}{\sqrt{k}})$ & $O(\frac{\ln k}{k})$   \\ \cline{2-4}
			\multicolumn{1}{ |c|  }{}                        &
			\multicolumn{1}{ l| } {DDA~($\alpha_k$)} & $O(\frac{\ln k}{\sqrt{k}})$ & $O(\frac{\ln k}{k})$   \\ \cline{2-4}
			\multicolumn{1}{ |c|  }{}                        &
			\multicolumn{1}{ l| } {DGD~($\alpha$)   (I)}  & $O(\frac{1}{k})$ &$O(\tau^k)$ \\ \cline{2-4}
			\multicolumn{1}{ |c|  }{}                        &
			\multicolumn{1}{ l| } {DNG~($\alpha_k$)}  & $O(\frac{\ln k}{k})$ &   \\ \cline{2-4}
			\multicolumn{1}{ |c|  }{}                        &
			\multicolumn{1}{ l| } {DNC~($\alpha$)}  & $O(\frac{\ln k}{k^2})$ &  \\ \cline{2-4}
			\multicolumn{1}{ |c|  }{}                        &
			\multicolumn{1}{ l| } {DADMM~($\alpha$)   (C)}  &   & $O(\tau^k)$  \\ \cline{2-4}
			\multicolumn{1}{ |c|  }{}                        &
			\multicolumn{1}{ l| } {DLM~($\alpha$)}  &   & $O(\tau^k)$   \\ \cline{2-4}
			\multicolumn{1}{ |c|  }{}                        &
			\multicolumn{1}{ l| } {EXTRA~($\alpha$)}  & $O(\frac{1}{k})$ & $O(\tau^k)$   \\ \cline{1-4}
			\multicolumn{1}{ |c|  }{\multirow{3}{*}{directed} } &
			\multicolumn{1}{ l| } {GP~($\alpha_k$)} & $O(\frac{\ln k}{\sqrt{k}})$ & $O(\frac{\ln k}{k})$   \\ \cline{2-4}
			\multicolumn{1}{ |c|  }{}                        &
			\multicolumn{1}{ l| } {D-DGD~($\alpha_k$)} & $O(\frac{\ln k}{\sqrt{k}})$ & $O(\frac{\ln k}{k})$   \\ \cline{2-4}
			\multicolumn{1}{ |c|  }{}                        &
			\multicolumn{1}{ l| } {WB-DGD~($\alpha_k$)} & $O(\frac{\ln k}{\sqrt{k}})$ & $O(\frac{\ln k}{k})$   \\ \cline{2-4}
			\multicolumn{1}{ |c|  }{}                        &
			\multicolumn{1}{ l| } {DEXTRA~($\alpha$)}  &  &  $O(\tau^k)$  \\ \cline{1-4}						
	\end{tabular}
	\caption[Table caption text]{Convergence rate of first-order distributed optimization algorithms regarding undirected and directed graphs. }
	\label{table:alg}
\end{center}
\end{table}

In this paper, we propose a fast distributed algorithm, termed DEXTRA, to solve the corresponding distributed optimization problem over directed graphs. We assume that the objective functions are restricted strongly-convex, a relaxed version of strong-convexity, under which we show that DEXTRA converges linearly to the optimal solution of the problem. DEXTRA combines the push-sum protocol and EXTRA. The push-sum protocol has been proven useful in dealing with optimization over digraphs,~\cite{opdirect_Nedic,opdirect_Tsianous,opdirect_Tsianous2,opdirect_Tsianous3}, while EXTRA works well in optimization problems over undirected graphs with a fast convergence rate and a low computation complexity. By integrating the push-sum technique into EXTRA, we show that DEXTRA converges exactly to the optimal solution with a linear rate,~$O(\tau^k)$, when the underlying network is directed. Note that~$O(\tau^k)$ is commonly described as linear and it should be interpreted as linear on a log-scale. The fast convergence rate is guaranteed because DEXTRA has a constant step-size compared with the diminishing step-size used in GP, D-DGD, or WB-DGD. Currently, our formulation is limited to restricted strongly-convex functions. Finally, we note that an earlier version of DEXTRA,~\cite{DEXTRA}, was used in~\cite{the_copy_work} to develop Normalized EXTRAPush. Normalized EXTRAPush implements the DEXTRA iterations after computing the right eigenvector of the underlying column-stochastic, weighting matrix; this computation requires \textit{either} the knowledge of the weighting matrix at each agent, \textit{or}, an iterative algorithm that converges asymptotically to the right eigenvector. Clearly, DEXTRA does not assume such knowledge.

The remainder of the paper is organized as follows. Section~\ref{s2} describes, develops, and interprets the DEXTRA algorithm. Section~\ref{s3} presents the appropriate assumptions and states the main convergence results. In Section~\ref{s4}, we present some lemmas as the basis of the proof of DEXTRA's convergence. The main proof of the convergence rate of DEXTRA is provided in Section~\ref{s5}. We show numerical results in Section~\ref{s6} and Section~\ref{s7} contains the concluding remarks.

\textbf{Notation:} We use lowercase bold letters to denote vectors and uppercase italic letters to denote matrices. We denote by~$[\mb{x}]_i$, the~$i$th component of a vector,~$\mb{x}$. For a matrix,~$A$, we denote by~$[A]_i$, the~$i$th row of $A$, and by~$[A]_{ij}$, the $(i,j)$th element of $A$. The matrix,~$I_n$, represents the~$n\times n$ identity, and~$\mb{1}_n$ and $\mb{0}_n$ are the~$n$-dimensional vector of all $1$'s and $0$'s. The inner product of two vectors,~$\mb{x}$ and~$\mb{y}$, is~$\langle\mb{x},\mb{y}\rangle$. The Euclidean norm of any vector, $\mb{x}$, is denoted by $\|\mb{x}\|$. We define the $A$-matrix norm,~$\left\|\mb{x}\right\|_A^2$, of any vector,~$\mb{x}$, as~$$\left\|\mb{x}\right\|_A^2\triangleq\langle\mb{x},A\mb{x}\rangle=\langle\mb{x},A^\top\mb{x}\rangle=\langle\mb{x},\frac{A+A^\top}{2}\mb{x}\rangle,$$where $A$ is not necessarily symmetric. Note that the $A$-matrix norm is non-negative only when~$A+A^\top$ is Positive Semi-Definite (PSD). If a symmetric matrix,~$A$, is PSD, we write~$A\succeq 0$, while~$A\succ 0$ means~$A$ is Positive Definite (PD). The largest and smallest eigenvalues of a matrix~$A$ are denoted as~$\lambda_{\max}(A)$ and~$\lambda_{\min}(A)$. The smallest~\emph{nonzero} eigenvalue of a matrix~$A$ is denoted as~$\widetilde{\lambda}_{\min}(A)$. For any~$f(\mb{x})$,~$\nabla f(\mb{x})$ denotes the gradient of~$f$ at~$\mb{x}$.

\section{DEXTRA Development}\label{s2}
In this section, we formulate the optimization problem and describe DEXTRA. We first derive an informal but intuitive proof showing that DEXTRA pushes the agents to achieve consensus and reach the optimal solution. The EXTRA algorithm,~\cite{EXTRA}, is briefly recapitulated in this section. We derive DEXTRA to a similar form as EXTRA such that our algorithm can be viewed as an improvement of EXTRA suited to the case of directed graphs. This reveals the meaning behind DEXTRA: Directed EXTRA. Formal convergence results and proofs are left to Sections~\ref{s3},~\ref{s4}, and~\ref{s5}.

Consider a strongly-connected network of~$n$ agents communicating over a directed graph,~$\mc{G}=(\mc{V},\mc{E})$, where~$\mc{V}$ is the set of agents, and~$\mc{E}$ is the collection of ordered pairs,~$(i,j),i,j\in\mc{V}$, such that agent~$j$ can send information to agent~$i$. Define~$\mc{N}_i^{{\scriptsize \mbox{in}}}$ to be the collection of in-neighbors, i.e., the set of agents that can send information to agent~$i$. Similarly,~$\mc{N}_i^{{\scriptsize \mbox{out}}}$ is the set of out-neighbors of agent~$i$. We allow both~$\mc{N}_i^{{\scriptsize \mbox{in}}}$ and~$\mc{N}_i^{{\scriptsize \mbox{out}}}$ to include the node~$i$ itself. Note that in a directed graph when~$(i,j)\in\mc{E}$, it is not necessary that~$(j,i)\in\mc{E}$. Consequently,~$\mc{N}_i^{{\scriptsize \mbox{in}}}\neq\mc{N}_i^{{\scriptsize \mbox{out}}}$, in general. We focus on solving a convex optimization problem that is distributed over the above multi-agent network. In particular, the network of agents cooperatively solve the following optimization problem:
\begin{align}
\mbox{P1}:
\quad\mbox{min  }&f(\mb{x})=\sum_{i=1}^nf_i(\mb{x}),\nonumber
\end{align}
where each local objective function, $f_i:\mbb{R}^p\rightarrow\mbb{R}$, is convex and differentiable, and known only by agent $i$. Our goal is to develop a distributed iterative algorithm such that each agent converges to the global solution of Problem P1.

\subsection{EXTRA for undirected graphs}
EXTRA is a fast exact first-order algorithm that solve Problem P1 when the communication network is undirected. At the~$k$th iteration, agent~$i$ performs the following update:
\begin{align}\label{extra}
\mb{x}_i^{k+1}=&\mb{x}_i^k+\sum_{j\in\mc{N}_i}w_{ij}\mb{x}_j^k-\sum_{j\in\mc{N}_i}\widetilde{w}_{ij}\mb{x}_j^{k-1}-\alpha\left[\nabla f_i(\mb{x}_i^k)-\nabla f_i(\mb{x}_i^{k-1})\right],
\end{align}
where the weights, $w_{ij}$, form a weighting matrix, $W=\left\{w_{ij}\right\}$, that is symmetric and doubly-stochastic. The collection~$\widetilde{W}=\left\{\widetilde{w}_{ij}\right\}$ satisfies~$\widetilde{W}=\theta I_n+(1-\theta)W$, with some~$\theta\in(0,\frac{1}{2}]$. The update in Eq.~\eqref{extra} converges to the optimal solution at each agent~$i$ with a convergence rate of~$O(\frac{1}{k})$ and converges linearly when the objective functions are strongly-convex. To better represent EXTRA and later compare with DEXTRA, we write Eq.~\eqref{extra} in a matrix form. Let~$\mb{x}^k$,~$\nabla\mb{f}(\mb{x}^k)\in\mbb{R}^{np}$ be the collections of all agent states and gradients at time~$k$, i.e.,~$\mb{x}^k\triangleq[\mb{x}_1^k;\cdots;\mb{x}_n^k]$,~$\nabla\mb{f}(\mb{x}^k)\triangleq[\nabla f_1(\mb{x}_1^k);\cdots;\nabla f_n(\mb{x}_n^k)]$, and~$W$,~$\widetilde{W}\in\mbb{R}^{n\times n}$ be the weighting matrices collecting weights,~$w_{ij}$,~$\widetilde{w}_{ij}$, respectively. Then, Eq.~\eqref{extra} can be represented in a matrix form as:
\begin{align}\label{extra_matrix}
\mb{x}^{k+1}=&\left[(I_n+W)\otimes I_p\right]\mb{x}^k-(\widetilde{W}\otimes I_p)\mb{x}^{k-1}-\alpha\left[\nabla\mb{f}(\mb{x}^k)-\nabla\mb{f}(\mb{x}^{k-1})\right],
\end{align}
where the symbol~$\otimes$ is the Kronecker product. We now state DEXTRA and derive it in a similar form as EXTRA.
\subsection{DEXTRA Algorithm}
To solve the Problem P1 suited to the case of directed graphs, we propose DEXTRA that can be described as follows. Each agent,~$j\in\mc{V}$, maintains two vector variables:~$\mb{x}_j^k$,~$\mb{z}_j^k\in\mbb{R}^p$, as well as a scalar variable,~$y_j^k\in\mbb{R}$, where~$k$ is the discrete-time index. At the~$k$th iteration, agent~$j$ weights its states,~$a_{ij}\mb{x}_j^k$,~$a_{ij}y_j^k$, as well as~$\widetilde{a}_{ij}\mb{x}_j^{k-1}$, and sends these to each of its out-neighbors,~$i\in\mc{N}_j^{{\scriptsize \mbox{out}}}$, where the weights,~$a_{ij}$, and,~$\widetilde{a}_{ij}$,'s are such that:
\begin{align}
a_{ij}&=\left\{
\begin{array}{rl}
>0,&i\in\mc{N}_j^{{\scriptsize \mbox{out}}},\\
0,&\mbox{otw.},
\end{array}
\right.
\quad
\sum_{i=1}^na_{ij}=1,\forall j,\label{a}\\
\widetilde{a}_{ij}&=\left\{
\begin{array}{rl}
\theta+(1-\theta)a_{ij},&i=j,\\
(1-\theta)a_{ij},&i\neq j,
\end{array}
\right.
\qquad\quad
\forall j,\label{aa}
\end{align}
where~$\theta\in(0,\frac{1}{2}]$. With agent~$i$ receiving the information from its in-neighbors,~$j\in\mc{N}_i^{{\scriptsize \mbox{in}}}$, it calculates the state,~$\mb{z}_i^k$, by dividing~$\mb{x}_i^k$ over~$y_i^k$, and updates~$\mb{x}_i^{k+1}$ and~$y_i^{k+1}$  as follows:
\begin{subequations}\label{alg1}
\begin{align}
\mb{z}_i^k=&\frac{\mb{x}_i^k}{y_i^k},\label{alg1a}\\
\mb{x}_i^{k+1}=&\mb{x}_i^k+\sum_{j\in\mc{N}_i^{{\tiny \mbox{in}}}}\left(a_{ij}\mb{x}_j^k\right)-\sum_{j\in\mc{N}_i^{{\tiny \mbox{in}}}}\left(\widetilde{a}_{ij}\mb{x}_j^{k-1}\right)-\alpha\left[\nabla f_i(\mb{z}_i^k)-\nabla f_i(\mb{z}_i^{k-1})\right],\label{alg1b}\\
y_i^{k+1}=&\sum_{j\in\mc{N}_i^{{\tiny \mbox{in}}}}\left(a_{ij}y_j^k\right).\label{alg1c}
\end{align}
\end{subequations}
In the above,~$\nabla f_i(\mb{z}_i^k)$ is the gradient of the function~$f_i(\mb{z})$ at~$\mb{z}=\mb{z}_i^k$, and~$\nabla f_i(\mb{z}_i^{k-1})$ is the gradient at~$\mb{z}_i^{k-1}$, respectively. The method is initiated with an arbitrary vector,~$\mb{x}_i^0$, and with~$y_i^0=1$ for any agent~$i$. The step-size,~$\alpha$, is a positive number within a certain interval. We will explicitly discuss the range of~$\alpha$ in Section~\ref{s3}. We adopt the convention that~$\mb{x}_i^{-1}=\mb{0}_p$ and $\nabla f_i(\mb{z}_i^{-1})=\mb{0}_p$, for any agent $i$, such that at the first iteration,~i.e.,~$k=0$, we have the following iteration instead of Eq.~\eqref{alg1}, 
\begin{subequations}\label{alg1_0}
\begin{align}
\mb{z}_i^0=&\frac{\mb{x}_i^0}{y_i^0},\\
\mb{x}_i^{1}=&\sum_{j\in\mc{N}_i^{{\tiny \mbox{in}}}}\left(a_{ij}\mb{x}_j^0\right)-\alpha\nabla f_i(\mb{z}_i^0),\\
y_i^{1}=&\sum_{j\in\mc{N}_i^{{\tiny \mbox{in}}}}\left(a_{ij}y_j^0\right).
\end{align}
\end{subequations}
We note that the implementation of Eq.~\eqref{alg1} needs each agent to have the knowledge of its out-neighbors (such that it can design the weights according to Eqs.~\eqref{a} and~\eqref{aa}). In a more restricted setting, e.g., a broadcast application where it may not be possible to know the out-neighbors, we may use~$a_{ij}=|\mc{N}_j^{{\scriptsize \mbox{out}}}|^{-1}$; thus, the implementation only requires each agent to know its out-degree,~\cite{opdirect_Nedic,opdirect_Tsianous,opdirect_Tsianous2,opdirect_Tsianous3,D-DGD,D-DPS,opdirect_Makhdoumi}.

To simplify  the proof, we write DEXTRA, Eq.~\eqref{alg1}, in a matrix form. Let,~$A=\{a_{ij}\}\in\mbb{R}^{n\times n}$,~$\widetilde{A}=\{\widetilde{a}_{ij}\}\in\mbb{R}^{n\times n}$, be the collection of weights,~$a_{ij}$,~$\widetilde{a}_{ij}$, respectively. It is clear that both $A$ and $\widetilde{A}$ are column-stochastic matrices. Let $\mb{x}^k$,~$\mb{z}^k$,~$\nabla\mb{f}(\mb{x}^k)\in\mbb{R}^{np}$, be the collection of all agent states and gradients at time~$k$, i.e.,~$\mb{x}^k\triangleq[\mb{x}_1^k;\cdots;\mb{x}_n^k]$,~$\mb{z}^k\triangleq[\mb{z}_1^k;\cdots;\mb{z}_n^k]$,~$\nabla\mb{f}(\mb{x}^k)\triangleq[\nabla f_1(\mb{x}_1^k);\cdots;\nabla f_n(\mb{x}_n^k)]$, and~$\mb{y}^k\in\mbb{R}^n$ be the collection of agent states,~$y_i^k$, i.e.,~$\mb{y}^k\triangleq[y_1^k;\cdots;y_n^k]$. Note that at time~$k$, ~$\mb{y}^k$ can be represented by the initial value,~$\mb{y}^0$:
\begin{align}
\mb{y}^k=A\mb{y}^{k-1}=A^k\mb{y}^0=A^k\cdot\mb{1}_n.
\end{align}
Define a diagonal matrix,~$D^k\in\mbb{R}^{n\times n}$, for each~$k$, such that the~$i$th element of~$D^k$ is~$y_i^k$, i.e.,
\begin{equation}\label{D}
D^k=\mbox{diag}\left(\mb{y}^k\right)=\mbox{diag}\left(A^k\cdot\mb{1}_n\right).
\end{equation}
Given that the graph, $\mc{G}$, is strongly-connected and the corresponding weighting matrix, $A$, is non-negative, it follows that~$D^k$ is invertible for any~$k$. Then, we can write Eq.~\eqref{alg1} in the matrix form equivalently as follows:
\begin{subequations}\label{alg1_matrix}
	\begin{align}
	\mb{z}^k=&\left(\left[D^k\right]^{-1}\otimes I_p\right)\mb{x}^k,\label{alg1a_matrix}\\
	\mb{x}^{k+1}=&\mb{x}^k+(A\otimes I_p)\mb{x}^k-(\widetilde{A}\otimes I_p)\mb{x}^{k-1}-\alpha\left[\nabla\mb{f}(\mb{z}^k)-\nabla\mb{f}(\mb{z}^{k-1})\right],\label{alg1b_matrix}\\
	\mb{y}^{k+1}=&A\mb{y}^k,\label{alg1c_matrix}
	\end{align}
\end{subequations}
where both of the weight matrices,~$A$ and~$\widetilde{A}$, are column-stochastic and satisfy the relationship:~$\widetilde{A}=\theta I_n+(1-\theta)A$ with some~$\theta\in(0,\frac{1}{2}]$. From Eq.~\eqref{alg1a_matrix}, we obtain for any $k$
\begin{align}
\mb{x}^k=&\left(D^k\otimes I_p\right)\mb{z}^k.
\end{align}
Therefore, Eq.~\eqref{alg1_matrix} can be represented as a single equation:
\begin{align}\label{alg2}
&\left(D^{k+1}\otimes I_p\right)\mb{z}^{k+1}=\left[(I_n+A)D^{k}\otimes I_p\right]\mb{z}^{k}-(\widetilde{A}D^{k-1}\otimes I_p)\mb{z}^{k-1}-\alpha\left[\nabla\mb{f}(\mb{z}^k)-\nabla\mb{f}(\mb{z}^{k-1})\right].
\end{align}
We refer to the above algorithm as DEXTRA, since Eq.~\eqref{alg2} has a similar form as EXTRA in Eq.~\eqref{extra_matrix} and is designed to solve Problem P1 in the case of directed graphs. We state our main result in Section~\ref{s3}, showing that as time goes to infinity, the iteration in Eq.~\eqref{alg2} pushes~$\mb{z}^k$ to achieve consensus and reach the optimal solution in a linear rate. Our proof in this paper will based on the form, Eq.~\eqref{alg2}, of DEXTRA.

\subsection{Interpretation of DEXTRA}
In this section, we give an intuitive interpretation on DEXTRA's convergence to the optimal solution; the formal proof will appear in Sections~\ref{s4} and~\ref{s5}. Since $A$ is column-stochastic, the sequence, $\left\{\mb{y}^k\right\}$, generated by Eq. \eqref{alg1c_matrix}, satisfies $\lim_{k\rightarrow\infty}\mb{y}^k=\boldsymbol{\pi}$, where $\boldsymbol{\pi}$ is some vector in the span of $A$'s right-eigenvector corresponding to the eigenvalue~$1$. We also obtain that $D^\infty=\mbox{diag}\left(\boldsymbol{\pi}\right)$. For the sake of argument, let us assume that the sequences,~$\left\{\mb{z}^k\right\}$ and~$\left\{\mb{x}^k\right\}$, generated by DEXTRA, Eq.~\eqref{alg1_matrix} or~\eqref{alg2}, also converge to their own limits,~$\mb{z}^\infty$ and~$\mb{x}^\infty$, respectively (not necessarily true). According to the updating rule in Eq.~\eqref{alg1b_matrix}, the limit~$\mb{x}^\infty$ satisfies
\begin{align}
\mb{x}^\infty=&\mb{x}^\infty+(A\otimes I_p)\mb{x}^\infty-(\widetilde{A}\otimes I_p)\mb{x}^\infty-\alpha\left[\nabla\mb{f}(\mb{z}^\infty)-\nabla\mb{f}(\mb{z}^\infty)\right],
\end{align}
which implies that~$[(A-\widetilde{A})\otimes I_p]\mb{x}^\infty=\mb{0}_{np}$, or~$\mb{x}^\infty=\boldsymbol{\pi}\otimes\mb{u}$ for some vector,~$\mb{u}\in\mbb{R}^p$. It follows from Eq.~\eqref{alg1a_matrix} that
\begin{align}
\mb{z}^\infty=&\left(\left[D^\infty\right]^{-1}\otimes I_p\right)\left(\boldsymbol{\pi}\otimes\mb{u}\right)=\mb{1}_n\otimes\mb{u},
\end{align}
where the consensus is reached. The above analysis reveals the idea of DEXTRA, which is to overcome the imbalance of agent states occurred when the graph is directed: both~$\mb{x}^\infty$ and~$\mb{y}^\infty$ lie in the span of~$\boldsymbol{\pi}$; by dividing~$\mb{x}^\infty$ over~$\mb{y}^\infty$, the imbalance is canceled.

Summing up the updates in Eq.~\eqref{alg1b_matrix} over~$k$ from $0$ to $\infty$, we obtain that
\begin{align}\nonumber
\mb{x}^{\infty}=\left(A\otimes I_p\right)\mb{x}^\infty-\alpha\nabla\mb{f}(\mb{z}^\infty)-\sum_{r=0}^{\infty}\left[\left(\widetilde{A}-A\right)\otimes I_p\right]\mb{x}^r;
\end{align}
note that the first iteration is slightly different as shown in Eqs.~\eqref{alg1_0}. Consider~$\mb{x}^\infty=\boldsymbol{\pi}\otimes\mb{u}$ and the preceding relation. It follows that the limit,~$\mb{z}^\infty$, satisfies
\begin{align}
\alpha\nabla\mb{f}(\mb{z}^\infty)=-\sum_{r=0}^{\infty}\left[\left(\widetilde{A}-A\right)\otimes I_p\right]\mb{x}^r.
\end{align}
Therefore, we obtain that
\begin{align}
\alpha\left(\mb{1}_n\otimes I_p\right)^\top\nabla\mb{f}(\mb{z}^\infty)=-\left[\mb{1}_n^\top\left(\widetilde{A}-A\right)\otimes I_p\right]\sum_{r=0}^{\infty}\mb{x}^r=\mb{0}_p,\nonumber
\end{align}
which is the optimality condition of Problem P1 considering that $\mb{z}^\infty=\mb{1}_n\otimes\mb{u}$. Therefore, given the assumption that the sequence of DEXTRA iterates,~$\left\{\mb{z}^k\right\}$ and~$\left\{\mb{x}^k\right\}$, have limits,~$\mb{z}^\infty$ and~$\mb{x}^\infty$, we have the fact that~$\mb{z}^\infty$ achieves consensus and reaches the optimal solution of Problem P1. In the next section, we state our main result of this paper and we defer the formal proof to Sections~\ref{s4} and~\ref{s5}.

\section{Assumptions and Main Results}~\label{s3}
With appropriate assumptions, our main result states that DEXTRA converges to the optimal solution of Problem P1 linearly. In this paper, we assume that the agent graph,~$\mc{G}$, is strongly-connected; each local function,~$f_i:\mbb{R}^p\rightarrow\mbb{R}$, is convex and differentiable, and the optimal solution of Problem P1 and the corresponding optimal value exist. Formally, we denote the optimal solution by~$\mb{u}\in\mbb{R}^p$ and optimal value by~$f^*$, i.e.,
\begin{align}
f^*=f(\mb{u})=\min_{\mb{x}\in\mbb{R}^p}f(\mb{x}).
\end{align}
Let~$\mb{z}^*\in\mbb{R}^{np}$ be defined as
\begin{align}\label{z*}
\mb{z}^*=\mb{1}_n\otimes\mb{u}.
\end{align}
Besides the above assumptions, we emphasize some other assumptions regarding the objective functions and weighting matrices, which are formally presented as follows.
\begin{assump}[Functions and Gradients]\label{asp1}
	Each private function,~$f_i$, is convex and differentiable and satisfies the following assumptions.
	\begin{enumerate}[label=(\alph*)]
		\item The function,~$f_i$, has Lipschitz gradient with the constant~$L_{f_i}$, i.e.,~$\|\nabla f_i(\mb{x})-\nabla f_i(\mb{y})\|\leq L_{f_i}\|\mb{x}-\mb{y}\|$,~$\forall\mb{x}, \mb{y}\in\mbb{R}^p$.
		\item The function,~$f_i$,  is restricted strongly-convex\footnote{There are different definitions of restricted strong-convexity. We use the same as the one used in EXTRA,~\cite{EXTRA}.} with respect to point $\mb{u}$ with a positive constant~$S_{f_i}$, i.e.,~$S_{f_i}\|\mb{x}-\mb{u}\|^2\leq\langle\nabla f_i(\mb{x})-\nabla f_i(\mb{u}),\mb{x}-\mb{u}\rangle$,~$\forall\mb{x}\in\mbb{R}^p$, where $\mb{u}$ is the optimal solution of the Problem P1.
	\end{enumerate}
\end{assump}
\noindent Following Assumption~\ref{asp1}, we have for any~$\mb{x},\mb{y}\in\mbb{R}^{np}$,
\begin{subequations}\label{grad}
\begin{align}
\left\|\nabla \mb{f}(\mb{x})-\nabla \mb{f}(\mb{y})\right\|&\leq L_{f}\left\|\mb{x}-\mb{y}\right\|,\label{grada}\\
S_{f}\left\|\mb{x}-\mb{z}^*\right\|^2&\leq\left\langle\nabla \mb{f}(\mb{x})-\nabla \mb{f}(\mb{z}^*),\mb{x}-\mb{z}^*\right\rangle,\label{gradc}
\end{align}
\end{subequations}
where the constants~$L_f=\max_i\{{L_{f_i}}\}$,~$S_f=\min_i\{{S_{f_i}}\}$, and $\nabla\mb{f}(\mb{x})\triangleq[\nabla f_1(\mb{x}_1);\cdots;\nabla f_n(\mb{x}_n)]$ for any $\mb{x}\triangleq[\mb{x}_1;\cdots;\mb{x}_n]$. 

Recall the definition of~$D^k$ in Eq.~\eqref{D}, we formally denote the limit of~$D^k$ by~$D^\infty$, i.e.,
\begin{align}\label{Dinf}
D^\infty=\lim_{k\rightarrow\infty}D^k=\mbox{diag}\left(A^\infty\cdot\mb{1}_n\right)=\mbox{diag}\left(\boldsymbol{\pi}\right),
\end{align}
where~$\boldsymbol{\pi}$ is some vector in the span of the right-eigenvector of~$A$ corresponding to eigenvalue~$1$. The next assumption is related to the weighting matrices,~$A$,~$\widetilde{A}$, and~$D^\infty$.
\begin{assump}[Weighting matrices]\label{asp2}
	The weighting matrices,~$A$ and~$\widetilde{A}$, used in DEXTRA, Eq.~\eqref{alg1_matrix} or~\eqref{alg2}, satisfy the following.
	\begin{enumerate}[label=(\alph*)]
		\item $A$ is a column-stochastic matrix.
		\item $\widetilde{A}$ is a column-stochastic matrix and satisfies~$\widetilde{A}=\theta I_n+(1-\theta)A$, for some~$\theta\in(0,\frac{1}{2}]$.
		\item  $\left(D^\infty\right)^{-1}\widetilde{A}+\widetilde{A}^\top\left(D^\infty\right)^{-1}\succ 0$.
	\end{enumerate}
\end{assump}
\noindent 
One way to guarantee Assumption \ref{asp2}(c) is to design the weighting matrix, $\widetilde{A}$, to be \textit{diagonally-dominant}. For example, each agent~$j$ designs the following weights:
\begin{align}
a_{ij}=\left\{
\begin{array}{rl}
1-\zeta (|\mc{N}_j^{{\scriptsize \mbox{out}}}|-1),&i=j,\\
\zeta,&i\neq j,\quad i\in\mc{N}_j^{{\scriptsize \mbox{out}}},
\end{array}
\right.,
\nonumber
\end{align}
where $\zeta$ is some small positive constant close to zero. This weighting strategy guarantees the Assumption \ref{asp2}(c) as we explain in the following. According to the definition of $D^\infty$ in Eq.~\eqref{Dinf}, all eigenvalues of the matrix, $2(D^\infty)^{-1}=(D^\infty)^{-1}I_n+I_n^\top(D^\infty)^{-1}$, are greater than zero. Since eigenvalues are a continuous functions of the corresponding matrix elements,~\cite{eig1,eig2}, there must exist a small constant~$\overline{\zeta}$ such that for all $\zeta\in(0,\overline{\zeta})$ the weighting matrix, $\widetilde{A}$, designed by the constant weighting strategy with parameter $\zeta$, satisfies that all the eigenvalues of the matrix, $(D^\infty)^{-1}\widetilde{A}+\widetilde{A}^\top(D^\infty)^{-1}$, are greater than zero. In Section~\ref{s6}, we show DEXTRA's performance using this strategy.

Since the weighting matrices, $A$ and, $\widetilde{A}$, are designed to be column-stochastic, they satisfy the following.
\begin{lem}\label{lem2}
	(Nedic \textit{et al}.~\cite{opdirect_Nedic}) For any column-stochastic matrix~$A\in\mbb{R}^{n\times n}$, we have
	\begin{enumerate}[label=(\alph*)]
		\item The limit~$\lim_{k\rightarrow\infty}\left[A^k\right]$ exists and~$\lim_{k\rightarrow\infty}\left[A^k\right]_{ij}=\boldsymbol{\pi}_i$, where~$\boldsymbol{\pi}=\{\boldsymbol{\pi}_i\}$ is some vector in the span of the right-eigenvector of~$A$ corresponding to eigenvalue~$1$.
		\item For all~$i\in\{1,\cdots,n\}$, the entries~$\left[A^k\right]_{ij}$ and~$\boldsymbol{\pi}_i$ satisfy
		\begin{align}
		\left|\left[A^k\right]_{ij}-\boldsymbol{\pi}_i\right|<C\gamma^k,\qquad\forall j,\nonumber
		\end{align}
		where we can have~$C=4$ and~$\gamma=(1-\frac{1}{n^n})$.
	\end{enumerate}
\end{lem}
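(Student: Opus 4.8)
The plan is to read this as a Perron--Frobenius statement for the nonnegative matrix $A$ and then to make the rate explicit by a weak-ergodicity (Dobrushin) argument. I would first record the standing structural facts that make the conclusion true: although the lemma is phrased for ``any column-stochastic matrix,'' the claim genuinely needs $A$ to be primitive, which holds in our setting because $\mc{G}$ is strongly connected and each agent carries a self-loop ($a_{jj}>0$ since $j\in\mc{N}_j^{\mathrm{out}}$), so $A$ is both irreducible and aperiodic. For part (a), column-stochasticity gives $\mb{1}_n^\top A=\mb{1}_n^\top$, so $\mb{1}_n$ is a left eigenvector for the eigenvalue $1$ and the spectral radius equals $1$. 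By Perron--Frobenius applied to the primitive $A$, the eigenvalue $1$ is simple, it is the unique eigenvalue of maximum modulus, and it has a strictly positive right eigenvector $\boldsymbol{\pi}$, which I normalize by $\mb{1}_n^\top\boldsymbol{\pi}=1$. Splitting off the rank-one spectral projection onto the eigenvalue $1$, and using that every other eigenvalue has modulus strictly below $1$, yields $A^k\to\boldsymbol{\pi}\mb{1}_n^\top$; reading off the $(i,j)$ entry gives $\lim_{k\to\infty}[A^k]_{ij}=\boldsymbol{\pi}_i$, independent of $j$, which is exactly (a).

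For part (b), I would first prove a uniform positive lower bound on the entries of a fixed power. By strong connectivity there is, for every ordered pair $(i,j)$, a directed path from $j$ to $i$ of length at most $n-1$; padding it with self-loops (available since $a_{jj}>0$) produces a walk of length exactly $n$, and since each positive weight is at least $1/n$ under the out-degree weighting $a_{ij}=|\mc{N}_j^{\mathrm{out}}|^{-1}$, the contribution of this single walk already gives $[A^n]_{ij}\geq n^{-n}=:\delta$ for all $i,j$. Setting $B=A^n$, its transpose is row-stochastic with all entries at least $\delta$, so the Dobrushin ergodicity coefficient of $B$ is at most $1-n\delta$; one application of $B$ therefore contracts the spread among the columns by this factor, and the columns of $A^{mn}$ approach the common limit $\boldsymbol{\pi}$ at rate $(1-n\delta)^m$. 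To convert this per-block rate into the stated per-step rate I would invoke the elementary inequality $(1-n\delta)^{1/n}\leq 1-\delta$ (which is just Bernoulli's inequality $(1-\delta)^n\geq 1-n\delta$ read as an $n$th root), identifying $\gamma=1-\delta=1-n^{-n}$. Absorbing the sub-block remainder factor $\gamma^{-n}$ and the bounded initial spread into the prefactor then yields the entrywise bound $|[A^k]_{ij}-\boldsymbol{\pi}_i|<C\gamma^k$ with the universal choice $C=4$.

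The main obstacle is precisely the passage to the \emph{explicit, dimension-uniform} constants. The spectral argument of part (a) already furnishes geometric convergence, but at the qualitative rate equal to the modulus of the second-largest eigenvalue, which is neither computable nor uniform in the network. Producing $C=4$ and $\gamma=1-n^{-n}$ instead requires the combinatorial entry bound $[A^n]_{ij}\geq n^{-n}$ together with a contraction estimate that is tracked carefully enough that the rate emerges as $\gamma^k$ rather than the weaker $\gamma^{k/n}$, and with the finitely many transient iterates folded into $C$. Since this estimate and these exact constants are established in Nedic \emph{et al.}~\cite{opdirect_Nedic}, I would present the reduction above and defer the detailed ergodicity bookkeeping to that reference.
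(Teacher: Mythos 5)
Your proposal is sound, but note that the paper itself offers no proof of this lemma at all: it is imported verbatim by citation to Nedic \emph{et al.}, so there is nothing internal to compare against except that citation. What you supply is therefore strictly more than the paper does, and your sketch is the right one. In particular, you correctly identify the two points that the paper's phrasing glosses over: (i) the conclusion is false for a literally arbitrary column-stochastic matrix (a permutation matrix, or a periodic irreducible matrix, never converges), so primitivity --- here coming from strong connectivity plus the self-loops $a_{jj}>0$ guaranteed by $j\in\mc{N}_j^{\scriptsize \mbox{out}}$ --- must be invoked for part (a); and (ii) the \emph{explicit} constants $C=4$, $\gamma=1-n^{-n}$ cannot hold under arbitrary positive weights either (a two-node chain with entries $\epsilon,\epsilon'$ near $0$ mixes arbitrarily slowly), so the entrywise lower bound $a_{ij}\geq 1/n$ coming from the out-degree weighting is genuinely needed for part (b), exactly as in the cited reference. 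Your quantitative chain is correct as sketched: the padded-walk bound $[A^n]_{ij}\geq n^{-n}=\delta$, the Dobrushin contraction $\tau\leq 1-n\delta$ for $A^n$, the Bernoulli inequality $(1-\delta)^n\geq 1-n\delta$ to convert the per-block rate into the per-step rate $\gamma^k$, and the absorption of the remainder factor $\gamma^{-n}$ (which is bounded, e.g.\ by $16/9$ for $n\geq2$) and the initial total-variation spread (at most $2$) into the prefactor, which indeed lands below $4$. Deferring that final bookkeeping to Nedic \emph{et al.}\ is exactly parallel to what the paper does, so the proposal is acceptable as a proof sketch and, unlike the paper's bare statement, makes the hidden hypotheses explicit.
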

\noindent As a result, we obtain that for any $k$,
\begin{align}\label{DkDinfty}
\left\|D^k-D^\infty\right\|\leq nC\gamma^k.
\end{align}
Eq.~\eqref{DkDinfty} implies that different agents reach consensus in a linear rate with the constant $\gamma$. Clearly, the convergence rate of DEXTRA will not exceed this consensus rate (because the convergence of DEXTRA means both consensus and optimality are achieved). We will show this fact theoretically later in this section. We now denote some notations to simplify the representation in the rest of the paper. Define the following matrices, 
\begin{align}
\label{Meq}M&=(D^\infty)^{-1}\widetilde{A},\\
N&=(D^\infty)^{-1}(\widetilde{A}-A),\label{n}\\
\label{Qeq}Q&=(D^\infty)^{-1}(I_n+A-2\widetilde{A}),\\
\label{Peq}P&=I_n-A,\\
\label{Leq}L&=\widetilde{A}-A,\\
\label{Req}R&=I_n+A-2\widetilde{A},
\end{align}
and constants,
\begin{align}
d&=\max_k\left\{\|D^{k}\|\right\},\label{dconstant}\\
d^-&=\max_k\left\{\|(D^{k})^{-1}\|\right\},\label{d-constant}\\
d_{\infty}^-&=\|(D^\infty)^{-1}\|.\label{d-infconstant}
\end{align}
We also define some auxiliary variables and sequences. Let $\mb{q}^*\in\mbb{R}^{np}$ be some vector satisfying
\begin{align}\label{q*}
\left[L\otimes I_p\right]\mb{q}^*+\alpha\nabla \mb{f}(\mb{z}^*)=\mb{0}_{np};
\end{align}
and $\mb{q}^k$ be the accumulation of $\mb{x}^r$ over time:
\begin{align}\label{q}
\mb{q}^k=\sum_{r=0}^k\mb{x}^r.
\end{align}
Based on $M$, $N$, $D^k$, $\mb{z}^k$, $\mb{z}^*$, $\mb{q}^k$, and $\mb{q}^*$, we further define
\begin{align}\label{t}
G=\left[
\begin{array}{cc}
M^\top\otimes I_p &\\
  &N\otimes I_p
\end{array}
\right],
\mb{t}^k=\left[
\begin{array}{cc}
\left(D^k\otimes I_p\right)\mb{z}^k \\
\mb{q}^k \\
\end{array}
\right],
\mb{t}^*=\left[
\begin{array}{cc}
\left(D^\infty\otimes I_p\right)\mb{z}^* \\
\mb{q}^*
\end{array}
\right].
\end{align}
It is useful to note that the $G$-matrix norm, $\left\|\mb{a}\right\|_G^2$, of any vector, $\mb{a}\in\mbb{R}^{2np}$, is non-negative, i.e., $\left\|\mb{a}\right\|_G^2\geq0$, $\forall \mb{a}$. This is because $G+G^\top$ is PSD as can be shown with the help of the following lemma.
\begin{lem}\label{lem4}
	(Chung.~\cite{d_laplacian}) Let~$\mc{L}_\mc{G}$ denote the Laplacian matrix of a directed graph,~$\mc{G}$. Let~$U$ be a transition probability matrix associated to a Markov chain described on~$\mc{G}$ and~$\boldsymbol{s}$ be the left-eigenvector of~$U$ corresponding to eigenvalue~$1$. Then,
	\begin{align}\label{lem4_eq}
	\mc{L}_\mc{G}=I_n-\frac{S^{1/2}US^{-1/2}+S^{-1/2}U^\top S^{1/2}}{2},\nonumber
	\end{align}
	where~$S=\mbox{diag}(\boldsymbol{s})$. Additionally, if~$\mc{G}$ is strongly-connected, then the eigenvalues of~$\mc{L}_\mc{G}$ satisfy~$0=\lambda_0<\lambda_1<\cdots<\lambda_n$.
\end{lem}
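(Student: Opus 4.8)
The plan is to treat the displayed identity as Chung's \emph{definition} of the directed Laplacian and to concentrate the argument on its two substantive consequences: that $\mathcal{L}_\mathcal{G}$ is symmetric positive semi-definite, and that $0$ is its smallest eigenvalue with multiplicity one whenever $\mathcal{G}$ is strongly-connected. Symmetry is immediate, since $\frac{1}{2}(S^{1/2}US^{-1/2}+S^{-1/2}U^{\top}S^{1/2})$ is by construction the symmetric part of $S^{1/2}US^{-1/2}$ (its transpose is $S^{-1/2}U^{\top}S^{1/2}$); hence $\mathcal{L}_\mathcal{G}=\mathcal{L}_\mathcal{G}^{\top}$, so all its eigenvalues are real and can be ordered.

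First I would exhibit the zero eigenvalue explicitly. The transition matrix is row-stochastic, so $U\mathbf{1}_n=\mathbf{1}_n$, and by definition of the stationary left-eigenvector, $\boldsymbol{s}^{\top}U=\boldsymbol{s}^{\top}$, i.e. $U^{\top}S\mathbf{1}_n=S\mathbf{1}_n$. Setting $\mathbf{v}=S^{1/2}\mathbf{1}_n$, both $S^{1/2}US^{-1/2}\mathbf{v}=S^{1/2}U\mathbf{1}_n=\mathbf{v}$ and $S^{-1/2}U^{\top}S^{1/2}\mathbf{v}=S^{-1/2}U^{\top}S\mathbf{1}_n=S^{-1/2}S\mathbf{1}_n=\mathbf{v}$, so $\mathcal{L}_\mathcal{G}\mathbf{v}=\mathbf{v}-\mathbf{v}=\mathbf{0}_n$. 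This shows $\lambda_0=0$ with eigenvector $S^{1/2}\mathbf{1}_n$, whose entries are strictly positive because $\boldsymbol{s}>0$ by Perron--Frobenius applied to the irreducible chain on the strongly-connected $\mathcal{G}$.

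Next I would establish positive semi-definiteness through a sum-of-squares representation of the quadratic form. Since a real scalar equals its own transpose, the two cross terms coincide and $\mathbf{x}^{\top}\mathcal{L}_\mathcal{G}\mathbf{x}=\|\mathbf{x}\|^{2}-\mathbf{x}^{\top}S^{1/2}US^{-1/2}\mathbf{x}$ for any $\mathbf{x}$. The key manoeuvre is the change of variable $\mathbf{x}=S^{1/2}\mathbf{g}$, which gives $\|\mathbf{x}\|^{2}=\sum_i s_i g_i^{2}$ and $\mathbf{x}^{\top}S^{1/2}US^{-1/2}\mathbf{x}=\mathbf{g}^{\top}SU\mathbf{g}=\sum_{i,j}s_i U_{ij}g_i g_j$. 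Invoking row-stochasticity to rewrite $\sum_i s_i g_i^{2}=\sum_{i,j}s_i U_{ij}g_i^{2}$ and stationarity ($\sum_i s_i U_{ij}=s_j$) to rewrite $\sum_i s_i g_i^{2}=\sum_{i,j}s_i U_{ij}g_j^{2}$, one obtains $\mathbf{x}^{\top}\mathcal{L}_\mathcal{G}\mathbf{x}=\tfrac{1}{2}\sum_{i,j}s_i U_{ij}(g_i-g_j)^{2}\geq0$, since $s_i>0$ and $U_{ij}\geq0$. Hence $\mathcal{L}_\mathcal{G}\succeq0$ and every eigenvalue is non-negative.

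Finally, strong-connectivity yields simplicity of the zero eigenvalue: the form vanishes only if $g_i=g_j$ across every directed edge $(i,j)$ with $U_{ij}>0$, and since the walk is irreducible these edges connect all vertices, forcing $\mathbf{g}$ constant and $\mathbf{x}\in\Span\{S^{1/2}\mathbf{1}_n\}$. The null space is therefore one-dimensional, so $\lambda_1>0$ and the eigenvalues obey $0=\lambda_0<\lambda_1\leq\cdots$. The main obstacle I anticipate is the algebraic bookkeeping in the sum-of-squares step: the cancellation to $\tfrac{1}{2}\sum_{i,j}s_i U_{ij}(g_i-g_j)^{2}$ requires invoking \emph{both} $U\mathbf{1}_n=\mathbf{1}_n$ and $\boldsymbol{s}^{\top}U=\boldsymbol{s}^{\top}$ simultaneously and keeping the two symmetrized sums balanced; once that identity is secured, both semi-definiteness and the null-space characterization follow directly.
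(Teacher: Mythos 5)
Your proof is correct, but there is nothing in the paper to compare it against: the paper does not prove this lemma, it imports it verbatim from Chung's work on Laplacians of directed graphs (the reference cited in the statement) and immediately applies it to conclude that $N+N^\top=2\theta(D^\infty)^{-1/2}\mathcal{L}_\mathcal{G}(D^\infty)^{-1/2}$ is PSD. What you have supplied is a self-contained verification of the cited result, and it is sound: reading the displayed identity as Chung's \emph{definition} of $\mathcal{L}_\mathcal{G}$ is the right move (the lemma gives no independent definition of a directed Laplacian to check the identity against); the kernel vector $S^{1/2}\mathbf{1}_n$ is exhibited correctly using row-stochasticity $U\mathbf{1}_n=\mathbf{1}_n$ and stationarity $\boldsymbol{s}^\top U=\boldsymbol{s}^\top$; the sum-of-squares identity $\mathbf{x}^\top\mathcal{L}_\mathcal{G}\mathbf{x}=\tfrac{1}{2}\sum_{i,j}s_iU_{ij}(g_i-g_j)^2$ under $\mathbf{x}=S^{1/2}\mathbf{g}$ checks out exactly as you describe (each of the two rewrites of $\sum_i s_ig_i^2$ uses one of the two stochasticity properties); and irreducibility correctly forces $\mathbf{g}$ constant on the kernel, so the zero eigenvalue is simple. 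One discrepancy is worth flagging: your conclusion is $0=\lambda_0<\lambda_1\leq\cdots$, whereas the lemma as printed asserts the strict chain $0=\lambda_0<\lambda_1<\cdots<\lambda_n$, i.e., that \emph{all} eigenvalues are distinct. Distinctness does not follow from your argument, and it is not true in general (repeated positive eigenvalues are perfectly possible), so the paper's statement is loose on this point; your weaker version is the correct one, and it is also all the paper ever needs, since Lemma 4 is invoked downstream only to get $\mathcal{L}_\mathcal{G}\succeq 0$. The trade-off between the two routes is the usual one: the citation keeps the paper short, while your proof makes the dependence on the hypotheses (row-stochasticity, stationarity, strong connectivity) explicit and, as a by-product, exposes the overstatement in the quoted spectral claim.
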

\noindent Considering the underlying directed graph, $\mc{G}$, and let the weighting matrix $A$, used in DEXTRA, be the corresponding transition probability matrix, we obtain that 
\begin{align}
\mc{L}_\mc{G}=&\frac{(D^\infty)^{1/2}(I_n-A^\top)(D^\infty)^{-1/2}}{2}+\frac{(D^\infty)^{-1/2}(I_n-A)(D^\infty)^{1/2}}{2}.
\end{align}
Therefore, we have the matrix $N$, defined in Eq.~\eqref{n}, satisfy
\begin{align}\label{N}
&N+N^\top=2\theta\left(D^\infty\right)^{-1/2}\mc{L}_\mc{G}\left(D^\infty\right)^{-1/2},
\end{align}
where $\theta$ is the positive constant in Assumption \ref{asp2}(b). Clearly,~$N+N^\top$ is PSD as it is a product of PSD matrices and a non-negative scalar. 
Additionally, from Assumption \ref{asp2}(c), note that~$M+M^\top$ is PD and thus for any~$\mb{a}\in\mbb{R}^{np}$, it also follows that $\|\mb{a}\|^2_{M^\top\otimes I_p}\geq 0$. Therefore, we conclude that $G+G^\top$ is PSD and for any~$\mb{a}\in\mbb{R}^{2np}$,
\begin{align}\label{Gnorm}
\|\mb{a}\|^2_{G}\geq 0.
\end{align}
We now state the main result of this paper in Theorem~\ref{main_result}.
\begin{theorem}\label{main_result}
Define
\begin{eqnarray*}
C_1&=&d^-\left(d\left\|(I_n+A)\right\|+d\left\|\widetilde{A}\right\|+2\alpha L_f\right),\\ C_2&=&\frac{\left(\lambda_{\max}\left(NN^\top\right)+\lambda_{\max}\left(N+N^\top\right)\right)}{2\widetilde{\lambda}_{\min}\left(L^\top L\right)},\\
C_3&=&\alpha(nC)^2\left[\frac{C_1^2}{2\eta}+(d^-_\infty d^-L_f)^2\left(\eta+\frac{1}{\eta}\right)+\frac{ S_f}{d^2}\right],\\ 
C_4&=&8C_2\left(L_fd^-\right)^2,\\
C_5&=&\lambda_{\max}\left(\frac{M+M^\top}{2}\right)+4C_2\lambda_{\max}\left(R^\top R \right),\\
C_6&=&\frac{\frac{S_f}{d^2}-\eta-2\eta(d_\infty^-d^-L_f)^2}{2},\\
C_7&=&\frac{1}{2}\lambda_{\max}\left(MM^\top\right)+4C_2\lambda_{\max}\left(\widetilde{A}^\top \widetilde{A} \right),\\
  \Delta&=&C_6^2-4C_4\delta\left(\frac{1}{\delta}+C_5\delta\right),
\end{eqnarray*}
where $\eta$ is some positive constant satisfying that $0<\eta<\frac{S_f}{d^2(1+(d_\infty^-d^-L_f)^2)}$, and $\delta<\lambda_{\min}(M+M^\top)/(2C_7)$ is a positive constant reflecting the convergence rate. 

Let Assumptions~\ref{asp1} and~\ref{asp2} hold.  Then with proper step-size~$\alpha\in\left[\alpha_{\min}, \alpha_{\max}\right]$, there exist,~$0<\Gamma<\infty$ and~$0<\gamma<1$, such that the sequence~$\left\{\mb{t}^k\right\}$ defined in Eq.~\eqref{t} satisfies
\begin{align}\label{main_result_eq}
\left\|\mb{t}^k-\mb{t}^*\right\|_G^2\geq(1+\delta)\left\|\mb{t}^{k+1}-\mb{t}^*\right\|_G^2-\Gamma\gamma^k.
\end{align}
The constant $\gamma$ is the same as used in Eq. \eqref{DkDinfty}, reflecting the consensus rate.
The lower bound,~$\alpha_{\min}$, of~$\alpha$ satisfies $\alpha_{\min}\leq\underline{\alpha}$, where
\begin{align}\label{amin}
\underline{\alpha}\triangleq\frac{C_6-\sqrt{\triangle}}{2C_4\delta},
\end{align}
and the upper bound,~$\alpha_{\max}$, of~$\alpha$ satisfies $\alpha_{\max}\geq\overline{\alpha}$, where
\begin{align}\label{amax}
\overline{\alpha}\triangleq\min\left\{\frac{\eta\lambda_{min}\left(M+M^\top\right)}{2(d_{\infty}^-d^-L_f)^2},\frac{C_6+\sqrt{\triangle}}{2C_4\delta}\right\}.
\end{align}
\end{theorem}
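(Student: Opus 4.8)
The target estimate is equivalent to the one-step energy decrease
\[
\left\|\mb{t}^k-\mb{t}^*\right\|_G^2-\left\|\mb{t}^{k+1}-\mb{t}^*\right\|_G^2\geq\delta\left\|\mb{t}^{k+1}-\mb{t}^*\right\|_G^2-\Gamma\gamma^k,
\]
so the whole argument reduces to producing a lower bound of this form on the decrease of the $G$-norm energy. First I would recast DEXTRA in a telescoped form. Summing Eq.~\eqref{alg1b_matrix} from $0$ to $k$ (treating the modified first step Eq.~\eqref{alg1_0} separately) and using $\mb{q}^k=\sum_{r=0}^k\mb{x}^r$ from Eq.~\eqref{q} together with $L=\widetilde{A}-A$, the gradient differences telescope and I obtain the compact recursion
\[
\mb{x}^{k+1}=(\widetilde{A}\otimes I_p)\mb{x}^k-(L\otimes I_p)\mb{q}^k-\alpha\nabla\mb{f}(\mb{z}^k),\qquad \mb{x}^k=(D^k\otimes I_p)\mb{z}^k.
\]
I would then verify that $\mb{x}^*=(D^\infty\otimes I_p)\mb{z}^*=\boldsymbol{\pi}\otimes\mb{u}$ together with $\mb{q}^*$ from Eq.~\eqref{q*} is a fixed point: the crucial fact is $\widetilde{A}\boldsymbol{\pi}=\boldsymbol{\pi}$ (since $\widetilde{A}=\theta I_n+(1-\theta)A$ and $A\boldsymbol{\pi}=\boldsymbol{\pi}$), so that $(\widetilde{A}\otimes I_p)\mb{x}^*=\mb{x}^*$ and the fixed-point relation collapses exactly to the optimality condition Eq.~\eqref{q*}. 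Subtracting gives the error recursion
\[
\mb{x}^{k+1}-\mb{x}^*=(\widetilde{A}\otimes I_p)(\mb{x}^k-\mb{x}^*)-(L\otimes I_p)(\mb{q}^k-\mb{q}^*)-\alpha\left[\nabla\mb{f}(\mb{z}^k)-\nabla\mb{f}(\mb{z}^*)\right].
\]

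Next I would expand the energy difference by the three-point identity $\|\mb{a}\|_G^2-\|\mb{b}\|_G^2=2\langle\mb{a}-\mb{b},\mb{b}\rangle_G+\|\mb{a}-\mb{b}\|_G^2$ with $\mb{a}=\mb{t}^k-\mb{t}^*$, $\mb{b}=\mb{t}^{k+1}-\mb{t}^*$. Since $\mb{t}^k-\mb{t}^{k+1}=[\mb{x}^k-\mb{x}^{k+1};-\mb{x}^{k+1}]$ and $G$ is block-diagonal, the cross term splits into an $\mb{x}$-block weighted by $M=(D^\infty)^{-1}\widetilde{A}$ and a $\mb{q}$-block weighted by $N=(D^\infty)^{-1}L$. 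Substituting the error recursion into the $\mb{x}$-block exposes the gradient term $2\alpha\langle\nabla\mb{f}(\mb{z}^k)-\nabla\mb{f}(\mb{z}^*),\cdot\rangle$, which I would lower-bound through restricted strong convexity, Eq.~\eqref{gradc}, and control elsewhere through the Lipschitz bound, Eq.~\eqref{grada}. The sign conditions needed for the quadratic forms are supplied by $N+N^\top\succeq0$ from Eq.~\eqref{N} and $M+M^\top\succ0$ from Assumption~\ref{asp2}(c). One subtlety is that $L=\widetilde{A}-A$ is singular (it has left null vector $\mb{1}_n$), so $\|\mb{q}^k-\mb{q}^*\|$ can only be recovered from $\|(L\otimes I_p)(\mb{q}^k-\mb{q}^*)\|$ through the smallest \emph{nonzero} eigenvalue $\widetilde{\lambda}_{\min}(L^\top L)$; this is precisely what enters $C_2$.

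After these substitutions the non-vanishing parts assemble into an inequality whose coefficients are the constants $C_1,\dots,C_7$, and the admissibility condition becomes a quadratic inequality in $\alpha$: requiring its discriminant $\Delta=C_6^2-4C_4\delta(\tfrac1\delta+C_5\delta)$ to be nonnegative and $\alpha$ to lie between the two roots produces the interval $[\alpha_{\min},\alpha_{\max}]$ with $\underline{\alpha},\overline{\alpha}$ as in Eqs.~\eqref{amin}--\eqref{amax}. The residual terms caused by the directedness of the graph are handled by the geometric consensus estimate $\|D^k-D^\infty\|\leq nC\gamma^k$ of Eq.~\eqref{DkDinfty}: every mismatch term decays like $\gamma^k$ and, once aggregated with the uniform bounds $d$, $d^-$, $d_\infty^-$ and the constants $L_f,S_f$, is absorbed into the single perturbation $\Gamma\gamma^k$ on the right-hand side.

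The main obstacle I anticipate is the mismatch between the variable $\mb{z}^k=([D^k]^{-1}\otimes I_p)\mb{x}^k$, on which restricted strong convexity and Lipschitz continuity act, and the variable $\mb{x}^k$ that appears in the energy and in the recursion; because $D^k$ is time-varying, $\mb{z}^k-\mb{z}^*$ is not simply a fixed linear image of $\mb{x}^k-\mb{x}^*$. Converting the gradient estimates into statements about $\mb{x}^k-\mb{x}^*$ requires repeatedly inserting and subtracting terms of the form $(([D^k]^{-1}-[D^\infty]^{-1})\otimes I_p)\mb{x}^k$ and bounding them through Eq.~\eqref{DkDinfty}. The delicate bookkeeping is to keep all of these geometrically decaying contributions inside $\Gamma\gamma^k$ while still retaining a strictly positive multiple $\delta$ of $\|\mb{t}^{k+1}-\mb{t}^*\|_G^2$; this is exactly what forces the simultaneous restrictions $0<\eta<S_f/(d^2(1+(d_\infty^-d^-L_f)^2))$, $\delta<\lambda_{\min}(M+M^\top)/(2C_7)$, and $\Delta\geq0$, and ultimately constrains the step-size to the stated range.
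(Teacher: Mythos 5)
Your outline of the per-step estimate follows the paper's actual route quite closely: the telescoped recursion you write down is exactly the paper's Lemma~\ref{lem1}, the fixed-point identification via $\widetilde{A}\boldsymbol{\pi}=\boldsymbol{\pi}$ and Eq.~\eqref{q*} is the same, the $G$-norm expansion combined with restricted strong convexity and the Lipschitz bound is how Appendix~\ref{appprop1} proceeds, the recovery of $\|\mb{q}^{k+1}-\mb{q}^*\|$ through the smallest nonzero eigenvalue $\widetilde{\lambda}_{\min}(L^\top L)$ is precisely the role of $C_2$, and the two admissibility conditions on $\alpha$ (a quadratic inequality with discriminant $\Delta$, plus a linear condition giving the first entry in $\overline{\alpha}$) are Eqs.~\eqref{c1}--\eqref{c2}.

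However, there is a genuine gap: your argument is circular on the boundedness of the iterates, and the paper devotes the whole structure of Section~\ref{s5} (Propositions~\ref{prop1} and~\ref{prop2} and the induction) to breaking exactly this circularity. The geometrically decaying mismatch terms you propose to absorb into $\Gamma\gamma^k$ are not of size $O(\gamma^k)$ with an absolute constant; they are of size $\gamma^k\|\mb{z}^k\|$ (e.g., the terms $\left(\left(\left[D^k\right]^{-1}-\left[D^\infty\right]^{-1}\right)\otimes I_p\right)\mb{x}^k$, and all the bounds in Lemma~\ref{lem3}, which carry the factor $B$ with $\|\mb{z}^k\|\leq B$). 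Hence the constant $\Gamma=C_3B^2$ is well defined only if one already has a uniform bound $B$ on $\|\mb{z}^k\|$; by Lemma~\ref{lem5}, such a $B$ comes from a uniform bound $F$ on $\|\mb{t}^k-\mb{t}^*\|_G^2$, and the only way the paper obtains that bound for all $k$ is from the very inequality \eqref{main_result_eq} being proved---and even then only for $k\geq K$, with $K$ as in Eq.~\eqref{bigK}, chosen so that $\Gamma\gamma^k\leq\delta F$, because the inequality with the additive $+\Gamma\gamma^k$ term does not by itself prevent the energy from growing. The paper's resolution is an interlocked induction: Proposition~\ref{prop1} (boundedness at $k-1$ and $k$ implies the contraction inequality at $k$), Proposition~\ref{prop2} (boundedness plus the inequality at $k$ implies boundedness at $k+1$, valid for $k\geq K$), initialized by setting $F=\max_{1\leq k\leq K}\|\mb{t}^k-\mb{t}^*\|_G^2$ over the finite horizon. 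Without this bootstrap, or some alternative a priori bound on $\|\mb{z}^k\|$, your final absorption step cannot be completed, because $\Gamma$ is not yet a finite, $k$-independent constant at the point where you invoke it.
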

\begin{proof}
	See Section \ref{s5}.
\end{proof}
\noindent Theorem \ref{main_result} is the key result of this paper. We will show the complete proof of Theorem \ref{main_result} in Section \ref{s5}. Note that Theorem 1 shows the relation between $\|\mb{t}^k-\mb{t}^*\|_G^2$ and $\|\mb{t}^{k+1}-\mb{t}^*\|_G^2$ but we would like to show that $\mb{z}^k$ converges linearly to the optimal point $\mb{z}^*$, which Theorem \ref{main_result} does not show. To this aim, we provide Theorem \ref{main_result2} that describes a relation between $\|\mb{z}^k-\mb{z}^*\|^2$ and $\|\mb{z}^{k+1}-\mb{z}^*\|^2$.

In Theorem~\ref{main_result}, we are given specific bounds on $\alpha_{\min}$ and $\alpha_{\max}$. In order to ensure that the solution set of step-size, $\alpha$, is not empty, i.e., $\alpha_{\min}\leq\alpha_{\max}$, it is sufficient (but not necessary) to satisfy
\begin{align}
&\underline{\alpha}=\frac{C_6-\sqrt{\triangle}}{2C_4\delta}\leq\frac{\eta\lambda_{min}\left(M+M^\top\right)}{2(d_{\infty}^-d^-L_f)^2}\leq\overline{\alpha}\label{r2},
\end{align}
which is equivalent to 
\begin{align}\label{eta1}
\eta\geq\frac{\left(\frac{S_f}{2d^2}-\sqrt{\Delta}\right)/\left(2C_4\delta\right)}{\frac{\lambda_{min}\left(M+M^\top\right)}{2L_f^2(d_{\infty}^-d^-)^2}+\frac{1+2(d_\infty^-d^-L_f)^2}{4C_4\delta}}.
\end{align}
Recall from Theorem \ref{main_result} that
\begin{align}\label{eta2}
\eta\leq\frac{S_f}{d^2(1+(d_\infty^-d^-L_f)^2)}.
\end{align}
We note that it may not always be possible to find solutions for $\eta$ that satisfy both Eqs.~\eqref{eta1} and \eqref{eta2}. The theoretical restriction here is due to the fact that the step-size bounds in Theorem~\ref{main_result} are not tight. However, the representation of~$\underline{\alpha}$ and~$\overline{\alpha}$ imply how to increase the interval of appropriate step-sizes. For example, it may be useful to set the weights to increase~$\lambda_{\min}\left(M+N^\top\right)/(2d_{\infty^-}d^-)^2$ such that~$\overline{\alpha}$ is increased. We will discuss such strategies in the numerical experiments in Section~\ref{s6}. We also observe that in reality, the range of appropriate step-sizes is much wider. Note that the values of~$\underline{\alpha}$ and~$\overline{\alpha}$ need the knowledge of the network topology, which may not be available in a distributed manner. Such bounds are not uncommon in the literature where the step-size is a function of the entire topology or global objective functions, see \cite{DGD_Yuan,EXTRA}. It is an open question on how to avoid the global knowledge of network topology when designing the interval of~$\alpha$.
\begin{rem}
	The positive constant $\delta$ in Eq. \eqref{main_result_eq} reflects the convergence rate of $\|\mb{t}^k-\mb{t}^*\|_G^2$. The larger $\delta$ is, the faster $\|\mb{t}^k-\mb{t}^*\|_G^2$ converges to zero. As $\delta<\lambda_{\max}(M+M^\top)/(2C_7)$, we claim that the convergence rate of $\|\mb{t}^k-\mb{t}^*\|_G^2$ can not be arbitrarily large.
\end{rem}
Based on Theorem \ref{main_result}, we now show the $r$-linear convergence rate of DEXTRA to the optimal solution. 
\begin{theorem}\label{main_result2}
Let Assumptions~\ref{asp1} and~\ref{asp2} hold. With the same step-size, $\alpha$, used in Theorem \ref{main_result}, the sequence, $\{\mb{z}^k\}$, generated by DEXTRA, converges exactly to the optimal solution, $\mb{z}^*$, at an $r$-linear rate, i.e., there exist some bounded constants, $T>0$ and $\max\left\{\frac{1}{1+\delta},\gamma\right\}<\tau<1$, where $\delta$ and $\gamma$ are constants used in Theorem \ref{main_result}, Eq.~\eqref{main_result_eq}, such that for any $k$,
\begin{align}
\left\|\left(D^k\otimes I_p\right)\mb{z}^k-\left(D^\infty\otimes I_p\right)\mb{z}^*\right\|^2\leq T\tau^k.\nonumber
\end{align}
\end{theorem}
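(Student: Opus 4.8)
The plan is to read Eq.~\eqref{main_result_eq} as a perturbed contraction for the scalar sequence $V_k\triangleq\|\mb{t}^k-\mb{t}^*\|_G^2$, unroll it into a clean geometric bound, and then recover the claimed quantity as a multiple of the first block of $\mb{t}^k-\mb{t}^*$. Note first that $V_k\geq 0$ for all $k$ by Eq.~\eqref{Gnorm}, and that $\mb{t}^*$ is a fixed vector (with $\mb{q}^*$ the fixed vector of Eq.~\eqref{q*}), so $V_0$ is a finite constant. Rearranging Eq.~\eqref{main_result_eq} gives the forward recursion
\begin{align}
V_{k+1}\leq\frac{1}{1+\delta}V_k+\frac{\Gamma}{1+\delta}\gamma^k.\nonumber
\end{align}

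Next I would fix any $\tau$ with $\max\{\tfrac{1}{1+\delta},\gamma\}<\tau<1$ and prove $V_k\leq T_0\tau^k$ by induction, where $T_0\triangleq\max\{V_0,\,\Gamma/((1+\delta)\tau-1)\}$; the denominator $(1+\delta)\tau-1$ is positive precisely because $\tau>\tfrac{1}{1+\delta}$. The base case $k=0$ holds since $T_0\geq V_0$. For the step, assuming $V_k\leq T_0\tau^k$ and using $\gamma\leq\tau$,
\begin{align}
V_{k+1}\leq\frac{T_0}{1+\delta}\tau^k+\frac{\Gamma}{1+\delta}\tau^k=\frac{T_0+\Gamma}{1+\delta}\tau^k\leq T_0\tau^{k+1},\nonumber
\end{align}
where the last inequality is exactly the defining property $T_0\geq\Gamma/((1+\delta)\tau-1)$. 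This is where the strict inequality $\tau>\max\{\tfrac{1}{1+\delta},\gamma\}$ in the statement is needed: it keeps $T_0$ finite and lets the forcing term $\gamma^k$ be absorbed into $\tau^k$ without having to track the resonant case $\gamma=\tfrac{1}{1+\delta}$ separately.

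Finally I would project onto the first block. Using the definition of the $G$-matrix norm and the block-diagonal form of $G$ in Eq.~\eqref{t}, partition $\mb{t}^k-\mb{t}^*$ into its two $np$-blocks, the first being $\mb{a}_1\triangleq(D^k\otimes I_p)\mb{z}^k-(D^\infty\otimes I_p)\mb{z}^*$ and the second $\mb{q}^k-\mb{q}^*$, to obtain
\begin{align}
V_k=\left\|\mb{a}_1\right\|_{\frac{M+M^\top}{2}\otimes I_p}^2+\left\|\mb{q}^k-\mb{q}^*\right\|_{\frac{N+N^\top}{2}\otimes I_p}^2.\nonumber
\end{align}
By Eq.~\eqref{N}, $N+N^\top\succeq 0$, so the second term is nonnegative and may be dropped; by Assumption~\ref{asp2}(c), $M+M^\top=(D^\infty)^{-1}\widetilde{A}+\widetilde{A}^\top(D^\infty)^{-1}\succ 0$, so the first term is bounded below by $\lambda_{\min}(\tfrac{M+M^\top}{2})\|\mb{a}_1\|^2$. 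Combining with $V_k\leq T_0\tau^k$ yields
\begin{align}
\left\|\left(D^k\otimes I_p\right)\mb{z}^k-\left(D^\infty\otimes I_p\right)\mb{z}^*\right\|^2\leq\frac{T_0}{\lambda_{\min}\left(\frac{M+M^\top}{2}\right)}\tau^k,\nonumber
\end{align}
which is the claim with $T=T_0/\lambda_{\min}(\tfrac{M+M^\top}{2})$.

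I expect the only real subtlety to be bookkeeping rather than ideas: essentially all the analytic content sits in Theorem~\ref{main_result}, and the remaining work is the standard discrete Gr\"onwall-type unrolling plus a coercivity estimate for the $G$-norm. The one place that genuinely uses the hypotheses of this theorem beyond Theorem~\ref{main_result} is the coercivity step, which relies on Assumption~\ref{asp2}(c) to make $\frac{M+M^\top}{2}$ positive definite, so that $\lambda_{\min}(\tfrac{M+M^\top}{2})>0$ and the final division is legitimate.
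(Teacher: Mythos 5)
Your proposal is correct and follows essentially the same route as the paper: both read Eq.~\eqref{main_result_eq} as a perturbed contraction for $\left\|\mb{t}^k-\mb{t}^*\right\|_G^2$, convert it into a clean geometric bound $O(\tau^k)$ for any $\tau>\max\{\frac{1}{1+\delta},\gamma\}$, and then extract the first block via positive definiteness of $M+M^\top$ and positive semi-definiteness of $N+N^\top$. The only immaterial difference is bookkeeping: the paper unrolls to $\psi^k\left\|\mb{t}^0-\mb{t}^*\right\|_G^2+k\Gamma\psi^k$ with $\psi=\max\{\frac{1}{1+\delta},\gamma\}$ and absorbs the factor $k$ using $k(\psi/\tau)^k\leq\Psi$, whereas you absorb the forcing term directly in the induction with the constant $T_0=\max\{V_0,\,\Gamma/((1+\delta)\tau-1)\}$.
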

\begin{proof}
We start with Eq.~\eqref{main_result_eq} in Theorem~\ref{main_result}, which is defined earlier in Eq.~\eqref{t}. Since the $G$-matrix norm is non-negative. recall Eq.~\eqref{Gnorm}, we have $\|\mb{t}^{k}-\mb{t}^*\|_G^2\geq0$, for any $k$. Define $\psi=\max\left\{\frac{1}{1+\delta},\gamma\right\}$, where $\delta$ and $\gamma$ are constants in Theorem \ref{main_result}. From Eq.~\eqref{main_result_eq}, we have for any $k$,
\begin{align}
\left\|\mb{t}^k-\mb{t}^*\right\|_G^2&\leq\frac{1}{1+\delta}\left\|\mb{t}^{k-1}-\mb{t}^*\right\|_G^2+\Gamma\frac{\gamma^{k-1}}{1+\delta},\nonumber\\
&\leq\psi\left\|\mb{t}^{k-1}-\mb{t}^*\right\|_G^2+\Gamma\psi^k,\nonumber\\
&\leq\psi^k\left\|\mb{t}^{0}-\mb{t}^*\right\|_G^2+k\Gamma\psi^k.\nonumber
\end{align}
For any $\tau$ satisfying $\psi<\tau<1$, there exists a constant $\Psi$ such that $(\frac{\tau}{\psi})^k>\frac{k}{\Psi}$, for all $k$. Therefore, we obtain that
\begin{align}
\left\|\mb{t}^k-\mb{t}^*\right\|_G^2&\leq\tau^k\left\|\mb{t}^{0}-\mb{t}^*\right\|_G^2+(\Psi\Gamma)\frac{k}{\Psi}\left(\frac{\psi}{\tau}\right)^k\tau^k,\nonumber\\
&\leq\left(\left\|\mb{t}^{0}-\mb{t}^*\right\|_G^2+\Psi\Gamma\right)\tau^k.\label{t2_2}
\end{align}
From Eq.~\eqref{t} and the corresponding discussion, we have
\begin{align}
\left\|\mb{t}^k-\mb{t}^*\right\|_G^2=&\left\|\left(D^k\otimes I_p\right)\mb{z}^k-\left(D^\infty\otimes I_p\right)\mb{z}^*\right\|^2_{M^\top}+\left\|\mb{q}^k-\mb{q}^*\right\|^2_{N}.\nonumber
\end{align}
Since $N+N^\top$ is PSD, (see Eq.~\eqref{N}), it follows that
\begin{align}
\left\|\left(D^k\otimes I_p\right)\mb{z}^k-\left(D^\infty\otimes I_p\right)\mb{z}^*\right\|^2_{\frac{M+M^\top}{2}}\leq\left\|\mb{t}^k-\mb{t}^*\right\|_G^2.\nonumber
\end{align}
Noting that $M+M^\top$ is PD (see Assumption~\ref{asp2}(c)), i.e., all eigenvalues of $M+M^\top$ are positive, we obtain that
\begin{align}
&\left\|\left(D^k\otimes I_p\right)\mb{z}^k-\left(D^\infty\otimes I_p\right)\mb{z}^*\right\|^2_{\frac{\lambda_{\min}(M+M^\top)}{2}I_{np}}\leq\left\|\left(D^k\otimes I_p\right)\mb{z}^k-\left(D^\infty\otimes I_p\right)\mb{z}^*\right\|^2_{\frac{M+M^\top}{2}}.\nonumber
\end{align}
Therefore, we have that
\begin{align}
\frac{\lambda_{\min}(M+M^\top)}{2}\left\|\left(D^k\otimes I_p\right)\mb{z}^k-\left(D^\infty\otimes I_p\right)\mb{z}^*\right\|^2&\leq\left\|\mb{t}^k-\mb{t}^*\right\|_G^2.\nonumber\\
&\leq\left(\left\|\mb{t}^{0}-\mb{t}^*\right\|_G^2+\Psi\Gamma\right)\tau^k\nonumber.
\end{align}
By letting
\begin{align}
T=2\frac{\|\mb{t}^{0}-\mb{t}^*\|_G^2+\Psi\Gamma}{\lambda_{\min}(M+M^\top)},\nonumber
\end{align}
we obtain the desired result.
\end{proof}
\noindent Theorem \ref{main_result2} shows that the sequence, $\{\mb{z}^k\}$, converges at an $r$-linear rate to the optimal solution, $\mb{z}^*$, where the convergence rate is described by the constant, $\tau$. During the derivation of~$\tau$, we have~$\tau$ satisfying that $\gamma\leq\max\{\frac{1}{1+\delta},\gamma\}<\tau<1$. This implies that the convergence rate (described by the constant $\tau$) is bounded by the consensus rate (described by the constant $\gamma$). In Sections \ref{s4} and \ref{s5}, we present some basic relations and the proof of Theorem \ref{main_result}.

\section{Auxiliary Relations}~\label{s4}
We provide several basic relations in this section, which will help in the proof of Theorem \ref{main_result}. For the proof, we will assume that the sequences updated by DEXTRA have only one dimension, i.e., ~$p=1$; thus $z_i^k$,~$x_i^k\in\mbb{R},\forall i,k$. For~$\mb{x}_i^k,\mb{z}_i^k\in\mbb{R}^p$ being~$p$-dimensional vectors, the proof is the same for every dimension by applying the results to each coordinate. Therefore, assuming~$p=1$ is without the loss of generality. Let~$p=1$ and rewrite DEXTRA, Eq.~\eqref{alg2}, as
\begin{align}\label{alg3}
D^{k+1}\mb{z}^{k+1}=&(I_n+A)D^{k}\mb{z}^{k}-\widetilde{A}D^{k-1}\mb{z}^{k-1}-\alpha\left[\nabla\mb{f}(\mb{z}^k)-\nabla\mb{f}(\mb{z}^{k-1})\right].
\end{align}
We first establish a relation among~$D^k\mb{z}^k$,~$\mb{q}^k$,~$D^\infty\mb{z}^*$, and~$\mb{q}^*$, recall the notation and discussion after Lemma~\ref{lem2}).
\begin{lem}\label{lem1}
	Let Assumptions~\ref{asp1} and~\ref{asp2} hold. In DEXTRA, the quadruple sequence~$\{D^k\mb{z}^k, \mb{q}^k, D^\infty\mb{z}^*, \mb{q}^*\}$ obeys, for any $k$,
	\begin{align}\label{lem1_eq}
	&R\left(D^{k+1}\mb{z}^{k+1}-D^\infty\mb{z}^*\right)+\widetilde{A}\left(D^{k+1}\mb{z}^{k+1}-D^{k}\mb{z}^{k}\right)=-L\left(\mb{q}^{k+1}-\mb{q}^*\right)-\alpha\left[\nabla\mb{f}(\mb{z}^k)-\nabla\mb{f}(\mb{z}^{*})\right],
	\end{align}
	recall Eqs.~\eqref{Meq}--\eqref{q} for notation. 
\end{lem}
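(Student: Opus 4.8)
The plan is to derive Eq.~\eqref{lem1_eq} by summing the one-step DEXTRA recursion~\eqref{alg3} and then anchoring the result at the optimizer through two structural facts, namely $R\,D^\infty\mb{z}^*=\mb{0}_n$ and $L\mb{q}^*=-\alpha\nabla\mb{f}(\mb{z}^*)$. Throughout I work with $p=1$ and abbreviate $\mb{x}^k=D^k\mb{z}^k$, so that~\eqref{alg3} reads $\mb{x}^{k+1}=(I_n+A)\mb{x}^k-\widetilde{A}\mb{x}^{k-1}-\alpha[\nabla\mb{f}(\mb{z}^k)-\nabla\mb{f}(\mb{z}^{k-1})]$, while the first iteration~\eqref{alg1_0} gives $\mb{x}^1=A\mb{x}^0-\alpha\nabla\mb{f}(\mb{z}^0)$. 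Recalling $\mb{q}^k=\sum_{r=0}^k\mb{x}^r$ and the shorthands $L=\widetilde{A}-A$ and $R=I_n+A-2\widetilde{A}$, and substituting $L\mb{q}^*=-\alpha\nabla\mb{f}(\mb{z}^*)$ from Eq.~\eqref{q*}, the target identity becomes the cleaner statement $R(\mb{x}^{k+1}-D^\infty\mb{z}^*)+\widetilde{A}(\mb{x}^{k+1}-\mb{x}^k)=-L\mb{q}^{k+1}-\alpha\nabla\mb{f}(\mb{z}^k)$, which is what I would establish.

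First I would sum the recursion~\eqref{alg3} over $r=1,\dots,k$. The right-hand gradient differences telescope to $-\alpha[\nabla\mb{f}(\mb{z}^k)-\nabla\mb{f}(\mb{z}^0)]$, and the three index-shifted state sums reassemble into the accumulations $\mb{q}^{k+1}$, $\mb{q}^k$, and $\mb{q}^{k-1}=\mb{q}^k-\mb{x}^k$; the leftover boundary terms at $r=0$ are eliminated using the first-iteration relation $\mb{x}^1=A\mb{x}^0-\alpha\nabla\mb{f}(\mb{z}^0)$, after which all $\mb{x}^0$ contributions cancel because the coefficient $-I_n+(I_n+A)-A$ vanishes. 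The outcome is the compact intermediate identity $\mb{x}^{k+1}-\widetilde{A}\mb{x}^k+L\mb{q}^k=-\alpha\nabla\mb{f}(\mb{z}^k)$.

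To pass to the claimed form I would use $\mb{q}^{k+1}-\mb{q}^k=\mb{x}^{k+1}$ together with the algebraic identities $R+\widetilde{A}+L=I_n$ and $R+2\widetilde{A}=I_n+A$, and the crucial fact $R\,D^\infty\mb{z}^*=\mb{0}_n$. The last fact is the linchpin: since $\widetilde{A}=\theta I_n+(1-\theta)A$, one has $R=(1-2\theta)(I_n-A)$, and because $D^\infty\mb{z}^*=\boldsymbol{\pi}\,u$ lies in the eigenspace $A\boldsymbol{\pi}=\boldsymbol{\pi}$ (Lemma~\ref{lem2}(a)), it follows that $(I_n-A)D^\infty\mb{z}^*=\mb{0}_n$. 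Adding this zero term and eliminating $-\alpha\nabla\mb{f}(\mb{z}^k)$ via the intermediate identity then reproduces Eq.~\eqref{lem1_eq}. An equivalent route is to verify the claim by induction: differencing the statement at indices $k$ and $k-1$ collapses, through $R+\widetilde{A}+L=I_n$ and $R+2\widetilde{A}=I_n+A$, exactly onto~\eqref{alg3}, with the base case $k=0$ handled by~\eqref{alg1_0}.

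I expect the main obstacle to be the bookkeeping in the telescoping sum, in particular correctly tracking the shifted partial sums and using the special first iteration~\eqref{alg1_0} to clear the $r=0$ boundary terms so that the $\mb{x}^0$ contributions cancel. The one genuinely structural ingredient, as opposed to purely mechanical manipulation, is recognizing that $R$ annihilates $D^\infty\mb{z}^*$; this is where the column-stochasticity of $A$ and the eigenvector characterization of $\boldsymbol{\pi}$ from Lemma~\ref{lem2} enter, and it is precisely what makes the pair $\{D^\infty\mb{z}^*,\mb{q}^*\}$ a consistent fixed point of the iteration.
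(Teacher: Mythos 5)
Your proposal is correct and follows essentially the same route as the paper: summing the recursion to reach the intermediate identity $D^{k+1}\mb{z}^{k+1}=\widetilde{A}D^k\mb{z}^k-\alpha\nabla\mb{f}(\mb{z}^k)-L\mb{q}^k$, shifting $\mb{q}^k$ to $\mb{q}^{k+1}$ via $I_n-L=R+\widetilde{A}$, and anchoring at the optimum through $R\,D^\infty\mb{z}^*=\mb{0}_n$ and Eq.~\eqref{q*}. Your explicit justification of $R\,D^\infty\mb{z}^*=\mb{0}_n$ via $R=(1-2\theta)(I_n-A)$ and your careful handling of the $r=0$ boundary terms are slightly more detailed than the paper's presentation, but the argument is the same.
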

\begin{proof}
	We sum DEXTRA, Eq.~\eqref{alg3}, over time from~$0$ to~$k$,
	\begin{align}\nonumber
	D^{k+1}\mb{z}^{k+1}=\widetilde{A}D^k\mb{z}^k-\alpha\nabla\mb{f}(\mb{z}^k)-L\sum_{r=0}^{k}D^r\mb{z}^r.
	\end{align}
	By subtracting~$LD^{k+1}\mb{z}^{k+1}$ on both sides of the preceding equation and rearranging the terms, it follows that
	\begin{equation}\label{lem1_2}
	RD^{k+1}\mb{z}^{k+1}+\widetilde{A}\left(D^{k+1}\mb{z}^{k+1}-D^{k}\mb{z}^{k}\right)
	=-L\mb{q}^{k+1}-\alpha\nabla \mb{f}\left(\mb{z}^k\right).
	\end{equation}
	Note that $D^\infty\mb{z}^*=\boldsymbol{\pi}$, where~$\boldsymbol{\pi}$ is some vector in the span of the right-eigenvector of~$A$ corresponding to eigenvalue~$1$. Since~$R\boldsymbol{\pi}=\mb{0}_n$, we have
	\begin{align}\label{lem1_3}
	RD^\infty\mb{z}^*=\mb{0}_n.
	\end{align}
	By subtracting Eq.~\eqref{lem1_3} from Eq.~\eqref{lem1_2}, and noting that $L\mb{q}^*+\alpha\nabla \mb{f}(\mb{z}^*)=\mb{0}_{n}$, Eq.~\eqref{q*}, we obtain the desired result.
\end{proof}
Recall Eq.~\eqref{DkDinfty} that shows the convergence of $D^k$ to~$D^\infty$ at a geometric rate. We will use this result to develop a relation between $\left\|D^{k+1}\mb{z}^{k+1}-D^k\mb{z}^k\right\|$ and~$\left\|\mb{z}^{k+1}-\mb{z}^k\right\|$, which is in the following lemma.  Similarly, we can establish a relation between $\left\|D^{k+1}\mb{z}^{k+1}-D^\infty\mb{z}^*\right\|$ and $\left\|\mb{z}^{k+1}-\mb{z}^*\right\|$.
\begin{lem}\label{lem3}
	Let Assumptions~\ref{asp1} and~\ref{asp2} hold and recall the constants $d$ and $d^-$ from Eqs. \eqref{dconstant} and \eqref{d-constant}. If $\mb{z}^k$ is bounded, i.e., $\|\mb{z}^k\|\leq B<\infty$, then 
	\begin{enumerate}[label=(\alph*)]
		\item $
		\left\|\mb{z}^{k+1}-\mb{z}^k\right\|\leq d^-\left\|D^{k+1}\mb{z}^{k+1}-D^k\mb{z}^k\right\|+2d^-nCB\gamma^k;$
		\item $
		\left\|\mb{z}^{k+1}-\mb{z}^*\right\|\leq d^-\left\|D^{k+1}\mb{z}^{k+1}-D^\infty\mb{z}^*\right\|+d^-nCB\gamma^k;$
		\item $
		\left\|D^{k+1}\mb{z}^{k+1}-D^\infty\mb{z}^*\right\|\leq d\left\|\mb{z}^{k+1}-\mb{z}^*\right\|+nCB\gamma^k;
		$%
	\end{enumerate}
	where $C$ and $\gamma$ are constants defined in Lemma \ref{lem2}.
\end{lem}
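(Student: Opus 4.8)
The plan is to prove all three inequalities with a single device: insert a suitable cross term so that the claimed "main'' difference appears, isolate a factor of $D^{k+1}$ or its inverse, apply the triangle inequality together with submultiplicativity of the operator norm, and finally invoke the geometric consensus estimate $\|D^k-D^\infty\|\leq nC\gamma^k$ from Eq.~\eqref{DkDinfty} together with the boundedness hypothesis $\|\mb{z}^k\|\leq B$. Throughout I would use the constants $d=\max_k\|D^k\|$ and $d^-=\max_k\|(D^k)^{-1}\|$ from Eqs.~\eqref{dconstant}--\eqref{d-constant}, the fact that each $D^k$ is a positive diagonal (hence invertible) matrix, and the elementary bound $\gamma^{k+1}\leq\gamma^k$, valid since $\gamma\in(0,1)$.

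For part (a), I would left-multiply the target by $D^{k+1}$ and split off a cross term at $D^k\mb{z}^k$:
\begin{align*}
D^{k+1}\left(\mb{z}^{k+1}-\mb{z}^k\right)=\left(D^{k+1}\mb{z}^{k+1}-D^k\mb{z}^k\right)+\left(D^k-D^{k+1}\right)\mb{z}^k.
\end{align*}
Multiplying by $(D^{k+1})^{-1}$, using the triangle inequality, and bounding $\|(D^{k+1})^{-1}\|\leq d^-$ gives $\|\mb{z}^{k+1}-\mb{z}^k\|\leq d^-\|D^{k+1}\mb{z}^{k+1}-D^k\mb{z}^k\|+d^-\|D^k-D^{k+1}\|\,\|\mb{z}^k\|$. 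The only point requiring care is the second term: since $D^k$ and $D^{k+1}$ must be compared through $D^\infty$, I have $\|D^k-D^{k+1}\|\leq\|D^k-D^\infty\|+\|D^{k+1}-D^\infty\|\leq nC\gamma^k+nC\gamma^{k+1}\leq 2nC\gamma^k$, which is precisely what produces the factor $2$ in the claimed bound; combined with $\|\mb{z}^k\|\leq B$ this yields (a).

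Parts (b) and (c) are entirely analogous but simpler, since only a single copy of $D^\infty$ enters. For (b) I would write $D^{k+1}(\mb{z}^{k+1}-\mb{z}^*)=(D^{k+1}\mb{z}^{k+1}-D^\infty\mb{z}^*)+(D^\infty-D^{k+1})\mb{z}^*$, multiply by $(D^{k+1})^{-1}$, and bound $\|D^\infty-D^{k+1}\|\leq nC\gamma^{k+1}\leq nC\gamma^k$ (no factor $2$, as only one such difference appears). For (c) I would instead write $D^{k+1}\mb{z}^{k+1}-D^\infty\mb{z}^*=D^{k+1}(\mb{z}^{k+1}-\mb{z}^*)+(D^{k+1}-D^\infty)\mb{z}^*$ and bound $\|D^{k+1}\|\leq d$ directly, with no inversion needed. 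In both cases I use $\|\mb{z}^*\|\leq B$, so that the residual terms read $d^-nCB\gamma^k$ and $nCB\gamma^k$, respectively.

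I do not expect any step to present a genuine difficulty: the argument is a bounded-perturbation estimate resting entirely on the geometric rate in Eq.~\eqref{DkDinfty}. The closest thing to a subtlety is the triangle-inequality-through-$D^\infty$ in part (a) that generates the factor $2$, and the minor bookkeeping of ensuring that the fixed target $\mb{z}^*$ is also controlled by $B$ so that all three estimates share the single constant $B$.
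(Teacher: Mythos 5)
Your proposal is correct and follows essentially the same route as the paper: insert $(D^{k+1})^{-1}D^{k+1}$ (or factor out $D^{k+1}$ for part (c)), split at the cross term $D^{k}\mb{z}^k$, $D^\infty\mb{z}^*$, or $D^{k+1}\mb{z}^*$, and invoke the geometric bound of Eq.~\eqref{DkDinfty} with $\|\mb{z}^k\|\leq B$. Your explicit triangle-inequality step $\left\|D^k-D^{k+1}\right\|\leq\left\|D^k-D^\infty\right\|+\left\|D^{k+1}-D^\infty\right\|\leq 2nC\gamma^k$, and your flagging of the implicit assumption $\|\mb{z}^*\|\leq B$, are both points the paper glosses over silently, so if anything your write-up is slightly more careful than the original.
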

\begin{proof}
	(a)\begin{align}
	\left\|\mb{z}^{k+1}-\mb{z}^k\right\|&=\left\|\left(D^{k+1}\right)^{-1}\left(D^{k+1}\right)\left(\mb{z}^{k+1}-\mb{z}^k\right)\right\|,\nonumber\\
	&\leq\left\|\left(D^{k+1}\right)^{-1}\right\|\left\|D^{k+1}\mb{z}^{k+1}-D^{k}\mb{z}^{k}+D^{k}\mb{z}^{k}-D^{k+1}\mb{z}^k\right\|,\nonumber\\
	&\leq d^-\left\|D^{k+1}\mb{z}^{k+1}-D^k\mb{z}^k\right\|+d^-\left\|D^k-D^{k+1}\right\|\left\|\mb{z}^k\right\|,\nonumber\\
	&\leq d^-\left\|D^{k+1}\mb{z}^{k+1}-D^k\mb{z}^k\right\|+2d^-nCB\gamma^k.\nonumber
	\end{align}
	Similarly, we can prove (b). Finally, we have
	\begin{align}
	\left\|D^{k+1}\mb{z}^{k+1}-D^\infty\mb{z}^*\right\|&=\left\|D^{k+1}\mb{z}^{k+1}-D^{k+1}\mb{z}^{*}+D^{k+1}\mb{z}^{*}-D^{\infty}\mb{z}^*\right\|\nonumber,\\
	&\leq d\left\|\mb{z}^{k+1}-\mb{z}^*\right\|+\left\|D^{k+1}-D^\infty\right\|\left\|\mb{z}^*\right\|,\nonumber\\
	&\leq d\left\|\mb{z}^{k+1}-\mb{z}^*\right\|+nCB\gamma^k.\nonumber
\end{align}
The proof is complete.
\end{proof}
\noindent Note that the result of Lemma \ref{lem3} is based on the prerequisite that the sequence $\{\mb{z}^k\}$ generated by DEXTRA at $k$th iteration is bounded. We will show this boundedness property (for all $k$) together with the proof of Theorem \ref{main_result} in the next section. The following two lemmas discuss the boundedness of $\|\mb{z}^k\|$ for a fixed $k$.
\begin{lem}\label{lem5}
	Let Assumptions~\ref{asp1} and~\ref{asp2} hold and recall $\mb{t}^k$, $\mb{t}^*$, and $G$ defined in Eq.~\eqref{t}. If $\|\mb{t}^k-\mb{t}^*\|_G^2$ is bounded by some constant $F$ for some $k$, i.e.,$\|\mb{t}^k-\mb{t}^*\|_G^2\leq F$, we have $\|\mb{z}^k\|$ be bounded by a constant $B$ for the same $k$, defined as follow,
	\begin{align}\label{pf_asp}
	\left\|\mb{z}^k\right\|\leq B\triangleq\sqrt{\frac{2(d^-)^2F}{\lambda_{\min}\left(\frac{M+M^\top}{2}\right)}+2(d^-)^2\left\|D^\infty\mb{z}^*\right\|^2},
	\end{align}
	where $d^-$, $M$ are constants defined in Eq.~\eqref{d-constant} and \eqref{Meq}.
\end{lem}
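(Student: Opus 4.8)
The plan is to exploit the block-diagonal structure of $G$ together with the definiteness of its two blocks established after Lemma~\ref{lem4}; throughout I work with $p=1$ as fixed at the start of this section, so that $\mb{t}^k$ splits into the blocks $D^k\mb{z}^k$ and $\mb{q}^k$. First I would expand the $G$-matrix norm. Since $G$ is block-diagonal with blocks $M^\top$ and $N$, the norm decomposes as
\begin{align}
\left\|\mb{t}^k-\mb{t}^*\right\|_G^2=\left\|D^k\mb{z}^k-D^\infty\mb{z}^*\right\|_{M^\top}^2+\left\|\mb{q}^k-\mb{q}^*\right\|_N^2.\nonumber
\end{align}
Because $N+N^\top$ is PSD (see Eq.~\eqref{N}), the second term is non-negative, so the sum dominates the first term; combined with the hypothesis $\|\mb{t}^k-\mb{t}^*\|_G^2\leq F$ this gives $\|D^k\mb{z}^k-D^\infty\mb{z}^*\|_{M^\top}^2\leq F$.

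Next I would convert the $M^\top$-norm into the standard Euclidean norm. Since $M+M^\top$ is PD by Assumption~\ref{asp2}(c), any quadratic form $\langle\mb{a},\frac{M+M^\top}{2}\mb{a}\rangle$ is at least $\lambda_{\min}\left(\frac{M+M^\top}{2}\right)\|\mb{a}\|^2$, whence
\begin{align}
\lambda_{\min}\left(\frac{M+M^\top}{2}\right)\left\|D^k\mb{z}^k-D^\infty\mb{z}^*\right\|^2\leq F,\nonumber
\end{align}
so that $\|D^k\mb{z}^k-D^\infty\mb{z}^*\|^2\leq F/\lambda_{\min}\left(\frac{M+M^\top}{2}\right)$.

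The final step recovers $\|\mb{z}^k\|$ itself. Writing $\mb{z}^k=(D^k)^{-1}(D^k\mb{z}^k)$ and using the uniform bound $\|(D^k)^{-1}\|\leq d^-$ from Eq.~\eqref{d-constant}, I obtain $\|\mb{z}^k\|\leq d^-\|D^k\mb{z}^k\|$. A triangle inequality then gives $\|D^k\mb{z}^k\|\leq\|D^k\mb{z}^k-D^\infty\mb{z}^*\|+\|D^\infty\mb{z}^*\|$; squaring with $(a+b)^2\leq 2a^2+2b^2$ and substituting the bound from the previous step produces exactly the constant $B$ in Eq.~\eqref{pf_asp}.

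This argument is a short chain of norm inequalities, so there is no genuine obstacle; the only point requiring care is that $d^-$, defined as a maximum over all $k$, certainly applies to the particular index $k$ of the hypothesis. The one thing to get right is to discard the $N$-block at the correct moment so that the positive definiteness of $M+M^\top$ can be invoked on the remaining $M^\top$-block, and to track the factor of two from the squaring step so that the resulting bound matches the stated $B$.
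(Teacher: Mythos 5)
Your proposal is correct and follows essentially the same argument as the paper's own proof: discarding the non-negative $N$-block of the $G$-norm, invoking $\lambda_{\min}\left(\frac{M+M^\top}{2}\right)>0$ to pass from the $M^\top$-norm to the Euclidean norm, and then using $\|(D^k)^{-1}\|\leq d^-$ together with the squared triangle inequality. The paper merely writes the same chain of inequalities in the opposite direction (starting from $\frac{1}{2}\|\mb{z}^k\|^2$ and bounding upward to $\|\mb{t}^k-\mb{t}^*\|_G^2$), so the two proofs are identical in substance.
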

\begin{proof}
	We follow the following derivation,
\begin{align}
\frac{1}{2}\left\|\mb{z}^k\right\|^2&\leq\frac{(d^-)^2}{2}\left\|D^k\mb{z}^k\right\|^2\nonumber,\\
&\leq (d^-)^2\left\|D^k\mb{z}^k-D^\infty\mb{z}^*\right\|^2+(d^-)^2\left\|D^\infty\mb{z}^*\right\|^2\nonumber,\\\nonumber
&\leq\frac{(d^-)^2}{\lambda_{\min}\left(\frac{M+M^\top}{2}\right)}\left\|D^k\mb{z}^k-D^\infty\mb{z}^*\right\|^2_{M^\top}+(d^-)^2\left\|D^\infty\mb{z}^*\right\|^2\nonumber,\\\nonumber
&\leq\frac{(d^-)^2}{\lambda_{\min}\left(\frac{M+M^\top}{2}\right)}\left\|\mb{t}^k-\mb{t}^*\right\|^2_{G}+(d^-)^2\left\|D^\infty\mb{z}^*\right\|^2\nonumber,\\\nonumber
&\leq\frac{(d^-)^2F}{\lambda_{\min}\left(\frac{M+M^\top}{2}\right)}+(d^-)^2\left\|D^\infty\mb{z}^*\right\|^2\nonumber,
\end{align}
where the third inequality holds due to $M+M^\top$ being PD (see Assumption \ref{asp2}(c)), and the fourth inequality holds because $N$-matrix norm has been shown to be nonnegative (see Eq. \eqref{N}). Therefore, it follows that $\left\|\mb{z}^k\right\|\leq B$ for $B$ defined in Eq.~\eqref{pf_asp}, which is clearly $<\infty$ as long as $F<\infty.$
\end{proof}
\begin{lem}\label{lem6}
	Let Assumptions~\ref{asp1} and~\ref{asp2} hold and recall the definition of constant $C_1$ from Theorem \ref{main_result}. If $\|\mb{z}^{k-1}\|$ and $\|\mb{z}^{k}\|$ are bounded by a same constant $B$, we have that $\|\mb{z}^{k+1}\|$ is also bounded. More specifically, we have $\|\mb{z}^{k+1}\|\leq C_1B$.
\end{lem}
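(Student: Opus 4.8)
The plan is to bound $\|\mb{z}^{k+1}\|$ directly from the one-step DEXTRA recursion, Eq.~\eqref{alg3}, using the hypothesis $\|\mb{z}^{k-1}\|\leq B$ and $\|\mb{z}^k\|\leq B$ together with the uniform bounds $d$ and $d^-$ on $\|D^k\|$ and $\|(D^k)^{-1}\|$ from Eqs.~\eqref{dconstant} and~\eqref{d-constant}. The first step would be to pass from $\mb{z}^{k+1}$ to the variable $D^{k+1}\mb{z}^{k+1}$, which is the quantity actually propagated by the recursion: writing $\mb{z}^{k+1}=(D^{k+1})^{-1}(D^{k+1}\mb{z}^{k+1})$ and invoking sub-multiplicativity of the norm gives $\|\mb{z}^{k+1}\|\leq d^-\|D^{k+1}\mb{z}^{k+1}\|$.

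Next I would substitute the right-hand side of Eq.~\eqref{alg3} for $D^{k+1}\mb{z}^{k+1}$ and split the result with the triangle inequality into three pieces: $\|(I_n+A)D^k\mb{z}^k\|$, $\|\widetilde{A}D^{k-1}\mb{z}^{k-1}\|$, and $\alpha\|\nabla\mb{f}(\mb{z}^k)-\nabla\mb{f}(\mb{z}^{k-1})\|$. The first two terms are handled by sub-multiplicativity together with $\|D^k\|\leq d$ and $\|\mb{z}^k\|\leq B$, yielding the bounds $d\|I_n+A\|B$ and $d\|\widetilde{A}\|B$. For the gradient term I would apply the Lipschitz property~\eqref{grada} followed by a further triangle inequality, obtaining $\alpha L_f\|\mb{z}^k-\mb{z}^{k-1}\|\leq\alpha L_f(\|\mb{z}^k\|+\|\mb{z}^{k-1}\|)\leq 2\alpha L_f B$, where the last step uses the boundedness of both $\mb{z}^k$ and $\mb{z}^{k-1}$ by the same constant $B$.

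Collecting the three estimates gives $\|D^{k+1}\mb{z}^{k+1}\|\leq\left(d\|I_n+A\|+d\|\widetilde{A}\|+2\alpha L_f\right)B$, and multiplying through by $d^-$ reproduces exactly the constant $C_1$ defined in Theorem~\ref{main_result}, so that $\|\mb{z}^{k+1}\|\leq C_1 B$. I do not expect any genuine obstacle: the argument is a direct chain of norm inequalities, and the only point requiring care is recognizing that the gradient-difference term must be controlled through the Lipschitz constant $L_f$ and a two-step triangle inequality, so that it contributes precisely the $2\alpha L_f$ summand appearing in $C_1$ rather than a single-step bound.
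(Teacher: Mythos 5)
Your proposal is correct and follows essentially the same route as the paper: pass to $D^{k+1}\mb{z}^{k+1}$ via the recursion in Eq.~\eqref{alg3}, apply the triangle inequality and sub-multiplicativity with the bounds $d$ and $d^-$, and control the gradient difference by the Lipschitz property~\eqref{grada} together with $\|\mb{z}^k-\mb{z}^{k-1}\|\leq\|\mb{z}^k\|+\|\mb{z}^{k-1}\|\leq 2B$. The resulting constant matches $C_1$ exactly, so no further changes are needed.
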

\begin{proof}
According to the iteration of DEXTRA in Eq.~\eqref{alg3}, we can bound $D^{k+1}\mb{z}^{k+1}$ as
\begin{align}
\left\|D^{k+1}\mb{z}^{k+1}\right\|\leq&\left\|(I_n+A)D^{k}\right\|\left\|\mb{z}^{k}\right\|+\left\|\widetilde{A}D^{k-1}\right\|\left\|\mb{z}^{k-1}\right\|+\alpha L_f\left\|\mb{z}^k\right\|+\alpha L_f\left\|\mb{z}^{k-1}\right\|,\nonumber\\
\leq&\left[d\left\|(I_n+A)\right\|+d\left\|\widetilde{A}\right\|+2\alpha L_f\right]B,\nonumber
\end{align}
where $d$ is the constant defined in Eq.~\eqref{dconstant}. Accordingly, we have $\mb{z}^{k+1}$ be bounded as follow,
\begin{align}\label{zk+1}
\left\|\mb{z}^{k+1}\right\|\leq d^-\left\|D^{k+1}\mb{z}^{k+1}\right\|=C_1B.
\end{align}
\end{proof}

\section{Main Results}~\label{s5}
In this section, we first give two propositions that provide the main framework of the proof. Based on these propositions, we use induction to prove Theorem \ref{main_result}. Proposition \ref{prop1} claims that for all $k\in\mbb{N}^+$, if $\|\mb{t}^{k-1}-\mb{t}^*\|_G^2\leq F_1$ and $\|\mb{t}^{k}-\mb{t}^*\|_G^2\leq F_1$, for some bounded constant $F_1$, then, $\|\mb{t}^{k}-\mb{t}^*\|_G^2\geq(1+\delta)\|\mb{t}^{k+1}-\mb{t}^*\|_G^2-\Gamma\gamma^k,$ for some appropriate step-size.
\begin{prop}\label{prop1}
	Let Assumptions~\ref{asp1} and~\ref{asp2} hold, and recall the constants $C_1$, $C_2$, $C_3$, $C_4$, $C_5$, $C_6$, $C_7$, $\Delta$, $\delta$, and $\gamma$ from Theorem \ref{main_result}. Assume $\|\mb{t}^{k-1}-\mb{t}^*\|_G^2\leq F_1$ and $\|\mb{t}^{k}-\mb{t}^*\|_G^2\leq F_1$, for a same bounded constant $F_1$. Let the constant $B$ be a function of $F_1$ as defined in Eq.~\eqref{pf_asp} by substituting $F$ with $F_1$, and we define $\Gamma$ as
	\begin{align}\label{Gammashort}
	\Gamma=&C_3B^2.
	\end{align}
	With proper step-size~$\alpha$, Eq. \eqref{main_result_eq} is satisfied at $k$th iteration, i.e.,
	\begin{align}
	\left\|\mb{t}^k-\mb{t}^*\right\|_G^2\geq(1+\delta)\left\|\mb{t}^{k+1}-\mb{t}^*\right\|_G^2-\Gamma\gamma^k,\nonumber
	\end{align}
	where the range of step-size is given in Eqs.~\eqref{amin} and \eqref{amax} in Theorem \ref{main_result}.
\end{prop}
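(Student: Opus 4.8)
The plan is to read off the one-step inequality from the summed dynamics already packaged in Lemma~\ref{lem1}, following the template of the EXTRA analysis but carrying along the extra error terms created by the fact that $D^k$ has not yet reached its limit $D^\infty$. I work throughout with $p=1$, as justified at the start of Section~\ref{s4}, so that $G=\mathrm{diag}(M^\top,N)$ and $\|\mb{t}^k-\mb{t}^*\|_G^2=\|D^k\mb{z}^k-D^\infty\mb{z}^*\|_{M^\top}^2+\|\mb{q}^k-\mb{q}^*\|_N^2$. The engine is the identity of Lemma~\ref{lem1}; I would take the inner product of both sides with an appropriate multiple of $(D^\infty)^{-1}(D^{k+1}\mb{z}^{k+1}-D^\infty\mb{z}^*)$, exploiting $M=(D^\infty)^{-1}\widetilde{A}$, $N=(D^\infty)^{-1}L$, and $Q=(D^\infty)^{-1}R$ to turn each matrix product on the left into a weighted inner product. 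Two eigenvector facts clean this up: $L\boldsymbol{\pi}=\mb{0}$ (since $A\boldsymbol{\pi}=\widetilde{A}\boldsymbol{\pi}=\boldsymbol{\pi}$) and $L^\top\mb{1}_n=\mb{0}$ (column-stochasticity), which let me drop the $D^\infty\mb{z}^*$ pieces that would otherwise obstruct the telescoping. Combined with $\mb{q}^{k+1}-\mb{q}^k=\mb{x}^{k+1}=D^{k+1}\mb{z}^{k+1}$, the term $-N(\mb{q}^{k+1}-\mb{q}^*)$ becomes an $N$-weighted inner product of $\mb{q}^{k+1}-\mb{q}^*$ against its own increment, which polarization (applied to the symmetric parts $M+M^\top$ and $N+N^\top$ that define the $G$-norm) converts into the telescoping $\|\mb{t}^k-\mb{t}^*\|_G^2-\|\mb{t}^{k+1}-\mb{t}^*\|_G^2$.

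After these substitutions I expect an inequality of the schematic form
\[
\|\mb{t}^k-\mb{t}^*\|_G^2-\|\mb{t}^{k+1}-\mb{t}^*\|_G^2\;\geq\;2\alpha\langle\nabla\mb{f}(\mb{z}^k)-\nabla\mb{f}(\mb{z}^*),\,\mb{z}^k-\mb{z}^*\rangle+(\text{cross terms})-(\text{$\gamma^k$ residual}),
\]
so the next move is to supply positive curvature. I would invoke restricted strong convexity, Eq.~\eqref{gradc}, to bound the gradient inner product below by $S_f\|\mb{z}^k-\mb{z}^*\|^2$, and Lipschitz continuity, Eq.~\eqref{grada}, together with Young's inequality—split with the free parameter $\eta$ that appears in $C_3$, $C_6$, and $\overline{\alpha}$—to absorb the indefinite cross terms. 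The dual block $\mb{q}^{k+1}-\mb{q}^*$ cannot stay in the $N$-norm alone, since $N+N^\top$ is only PSD; here I would use $L\mb{q}^*+\alpha\nabla\mb{f}(\mb{z}^*)=\mb{0}$ from Eq.~\eqref{q*} to re-express $L(\mb{q}^{k+1}-\mb{q}^*)$ through the primal increments and gradient difference appearing in Lemma~\ref{lem1}, and then bound its $N$- and $NN^\top$-contributions by $\|L(\mb{q}^{k+1}-\mb{q}^*)\|^2$ on the subspace orthogonal to $\mathrm{span}(\boldsymbol{\pi})=\ker L$, using the smallest \emph{nonzero} eigenvalue $\widetilde{\lambda}_{\min}(L^\top L)$. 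This transfer of the dual block onto the primal block is exactly the origin of the constant $C_2$.

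The directed-graph correction is the third ingredient. Every place where a $\mb{z}$-norm must be traded for a $D\mb{z}$-norm (or vice versa) I would apply Lemma~\ref{lem3}, each application peeling off a term proportional to $B\gamma^k$ by way of Eq.~\eqref{DkDinfty}. The hypotheses $\|\mb{t}^{k-1}-\mb{t}^*\|_G^2\leq F_1$ and $\|\mb{t}^k-\mb{t}^*\|_G^2\leq F_1$ give $\|\mb{z}^{k-1}\|,\|\mb{z}^k\|\leq B$ through Lemma~\ref{lem5}, so all such residuals are uniformly bounded by constant multiples of $B^2\gamma^k$; collecting them produces precisely $\Gamma=C_3B^2$. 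What remains is a pure step-size argument: the surviving good terms must dominate $\delta\|\mb{t}^{k+1}-\mb{t}^*\|_G^2$. Bounding $\|\mb{t}^{k+1}-\mb{t}^*\|_G^2$ from above by $C_5$- and $C_7$-type multiples of the primal and increment norms reduces the requirement to a quadratic in $\alpha$ with coefficients assembled from $C_4,C_5,C_6,C_7,\delta$; its discriminant is $\Delta$ and its two roots are $\underline{\alpha}$ and $\overline{\alpha}$, delivering the admissible interval claimed in Theorem~\ref{main_result}.

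I expect the main obstacle to be the bookkeeping of the first two paragraphs: matching the asymmetric, $(D^\infty)^{-1}$-weighted left-hand side of Lemma~\ref{lem1} to clean $M^\top$- and $N$-weighted telescoping differences while the polarization identity is only exact on the symmetric parts, so the antisymmetric contributions of $M$ and $N$ and the $D^k-D^\infty$ discrepancies must all be routed into controllable terms with nothing left behind. The second genuine difficulty is the final step-size verification: ensuring the quadratic-in-$\alpha$ condition admits a nonempty interval $[\underline{\alpha},\overline{\alpha}]$, which is where the compatibility constraints on $\eta$ in Eqs.~\eqref{eta1}--\eqref{eta2} and the sign of $\Delta$ enter, and which is the part most sensitive to how tightly the cross terms were bounded.
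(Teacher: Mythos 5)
Your proposal follows essentially the same route as the paper's own proof in Appendix~\ref{appprop1}: pairing the identity of Lemma~\ref{lem1} with the $(D^\infty)^{-1}$-weighted error $D^{k+1}\mb{z}^{k+1}-D^\infty\mb{z}^*$, invoking restricted strong convexity plus Lipschitz/Young splits with the parameter $\eta$, using polarization on the $G$-norm to produce the telescoping difference, transferring the dual block onto the primal block via $\widetilde{\lambda}_{\min}(L^\top L)$ (the origin of $C_2$, exactly as in Eqs.~\eqref{lem3_eq_6}--\eqref{lem3_eq_7_1}), collecting all $D^k-D^\infty$ residuals through Lemmas~\ref{lem3} and~\ref{lem5} into $\Gamma=C_3B^2$, and closing with the quadratic-in-$\alpha$ condition whose discriminant is $\Delta$. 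The only cosmetic deviations are that the paper anchors the curvature at $\mb{z}^{k+1}$ (creating the $s_2$ bridging term) rather than $\mb{z}^k$, additionally needs Lemma~\ref{lem6} to bound $\|\mb{z}^{k+1}\|\leq C_1B$, and obtains $\overline{\alpha}$ as the minimum of the quadratic root and the separate linear constraint~\eqref{c3}; none of these alters the argument.
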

\begin{proof}
See Appendix \ref{appprop1}.
\end{proof}
\noindent Note that Proposition \ref{prop1} is different from Theorem \ref{main_result} in that: (i) it only proves the result, Eq.~\eqref{main_result_eq}, for a certain $k$, not for all $k\in\mbb{N}^+$; and, (ii) it requires the assumption that $\|\mb{t}^{k-1}-\mb{t}^*\|_G^2\leq F_1$, and $\|\mb{t}^{k}-\mb{t}^*\|_G^2\leq F_1$, for some bounded constant~$F_1$. Next, Proposition \ref{prop2} shows that for all $k\geq K$, where $K$ is some specific value defined later, if $\|\mb{t}^{k}-\mb{t}^*\|_G^2\leq F$, and $\|\mb{t}^{k}-\mb{t}^*\|_G^2\geq(1+\delta)\|\mb{t}^{k+1}-\mb{t}^*\|_G^2-\Gamma\gamma^k$, we have that $\|\mb{t}^{k+1}-\mb{t}^*\|_G^2\leq F$.
\begin{prop}\label{prop2}
	Let Assumptions~\ref{asp1} and~\ref{asp2} hold, and recall the constants $C_1$, $C_2$, $C_3$, $C_4$, $C_5$, $C_6$, $C_7$, $\Delta$, $\delta$, and $\gamma$ from Theorem \ref{main_result}. Assume that at $k$th iteration, $\|\mb{t}^{k}-\mb{t}^*\|_G^2\leq F_2$, for some bounded constant $F_2$, and $\|\mb{t}^{k}-\mb{t}^*\|_G^2\geq(1+\delta)\|\mb{t}^{k+1}-\mb{t}^*\|_G^2-\Gamma\gamma^k$. Then we have that
\begin{align}
\left\|\mb{t}^{k+1}-\mb{t}^*\right\|_G^2\leq F_2
\end{align}
is satisfied for all~$k\geq K$, where~$K$ is defined as
\begin{align}\label{bigK}
K= \ceil*{\log_r\left(\frac{\delta\lambda_{\min}\left(\frac{M+M^\top}{2}\right)}{2\alpha (d^-)^2C_3}\right)}.
\end{align}
\begin{proof}
See Appendix \ref{appprop2}.
\end{proof}
\end{prop}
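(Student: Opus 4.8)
The plan is to read Proposition~\ref{prop2} as the forward-invariance half of the induction behind Theorem~\ref{main_result}: Proposition~\ref{prop1} supplies the one-step inequality~\eqref{main_result_eq}, and here one must only check that the sublevel bound $F_2$ on $\|\mb{t}^k-\mb{t}^*\|_G^2$ is not inflated when passing to iteration $k+1$, provided $k$ is large enough. I would argue directly from the two stated hypotheses, without invoking the DEXTRA recursion again.

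First I would rearrange the assumed inequality $\|\mb{t}^{k}-\mb{t}^*\|_G^2\geq(1+\delta)\|\mb{t}^{k+1}-\mb{t}^*\|_G^2-\Gamma\gamma^k$ to isolate the $(k+1)$ term and insert the hypothesis $\|\mb{t}^{k}-\mb{t}^*\|_G^2\leq F_2$, which yields $(1+\delta)\|\mb{t}^{k+1}-\mb{t}^*\|_G^2\leq F_2+\Gamma\gamma^k$, i.e. $\|\mb{t}^{k+1}-\mb{t}^*\|_G^2\leq(F_2+\Gamma\gamma^k)/(1+\delta)$. The key reduction is that this upper bound is itself $\leq F_2$ \emph{exactly} when $\Gamma\gamma^k\leq\delta F_2$; the strict positivity of $\delta$ is precisely what creates the slack $\delta F_2$ that must absorb the perturbation. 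Since $0<\gamma<1$, the term $\Gamma\gamma^k$ is monotonically decreasing in $k$, so it suffices to take $k$ beyond the index at which $\gamma^k=\delta F_2/\Gamma$; taking logarithms in the base $\gamma$ (which reverses the inequality) gives a threshold of the form $k\geq\log_\gamma(\delta F_2/\Gamma)$, and substituting $\Gamma=C_3B^2$ together with $B^2$ from~\eqref{pf_asp} (with $F=F_2$) should reproduce the explicit index $K$ of~\eqref{bigK} after tracking the step-size factors carried by $C_3$.

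The main obstacle, and essentially the only nontrivial point, is that $\Gamma$ itself depends on $F_2$ through $B$: by~\eqref{pf_asp}, $B^2$ is affine in $F_2$, so the ratio $\delta F_2/\Gamma=\delta F_2/(C_3B^2)$ must be bounded below by a quantity that does \emph{not} depend on $F_2$, otherwise the stated $K$ would fail to be a genuine, $F_2$-free iteration index. I would resolve this by retaining the $F_2$-proportional term $2(d^-)^2F_2/\lambda_{\min}((M+M^\top)/2)$ of $B^2$ as the dominant contribution, whose factor of $F_2$ cancels the numerator and leaves an $F_2$-free argument of the form appearing inside the logarithm of~\eqref{bigK}. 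Finally I would confirm that this argument lies in $(0,1)$, so that $K$ is a positive finite index, using $\delta<\lambda_{\min}(M+M^\top)/(2C_7)$ together with the smallness of $\alpha$; all remaining manipulations are elementary algebra.
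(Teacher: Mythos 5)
Your proposal is correct and is essentially the paper's own proof: rearrange the assumed one-step inequality to $\|\mb{t}^{k+1}-\mb{t}^*\|_G^2\leq(F_2+\Gamma\gamma^k)/(1+\delta)$, observe that the conclusion $\|\mb{t}^{k+1}-\mb{t}^*\|_G^2\leq F_2$ follows once $\Gamma\gamma^k\leq\delta F_2$, and use the geometric decay of $\gamma^k$ together with the definition of $K$ in Eq.~\eqref{bigK}. Even your handling of the one delicate point --- cancelling $F_2$ against the $F_2$-proportional part of $B^2$ inside $\Gamma=C_3B^2$ --- is exactly the cancellation the paper performs, and it inherits the same looseness as the paper's Appendix: keeping only the term $2(d^-)^2F_2/\lambda_{\min}\big(\tfrac{M+M^\top}{2}\big)$ of $B^2$ bounds $\delta F_2/\Gamma$ from \emph{above} rather than below, so strictly speaking both arguments additionally need $F_2$ large enough to absorb the constant term $2(d^-)^2\|D^\infty\mb{z}^*\|^2$ (which is harmless in the induction of Theorem~\ref{main_result}, where $F$ may be enlarged, but is worth stating).
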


\subsection{Proof of Theorem~\ref{main_result}}
We now formally state the proof of Theorem \ref{main_result}.
\begin{proof}
Define $F=\max_{1\leq k\leq K}\{\|\mb{t}^k-\mb{t}^*\|_G^2\}$, where $K$ is the constant defined in Eq.~\eqref{bigK}. The goal is to show that Eq.~\eqref{main_result_eq} in Theorem~\ref{main_result} is valid for all $k$ with the step-size being in the range defined in Eqs.~\eqref{amin} and $\eqref{amax}$. 

We first prove the result for $k\in[1,...,K]$: Since $\left\|\mb{t}^k-\mb{t}^*\right\|_G^2\leq F$, $\forall k\in[1,...,K]$, we use the result of Proposition \ref{prop1} to have $$\|\mb{t}^{k}-\mb{t}^*\|_G^2\geq(1+\delta)\|\mb{t}^{k+1}-\mb{t}^*\|_G^2-\Gamma\gamma^k,\quad\forall k\in[1,...,K].$$

Next, we use induction to show Eq.~\eqref{main_result_eq} for all $k\geq K$. For~$F$ defined above:

\noindent (i) Base case: when $k=K$, we have the initial relations that
\begin{subequations}
	\begin{align}
	\left\|\mb{t}^{K-1}-\mb{t}^*\right\|_G^2&\leq F,\\
	\left\|\mb{t}^K-\mb{t}^*\right\|_G^2&\leq F,\\ 
	\|\mb{t}^{K}-\mb{t}^*\|_G^2&\geq(1+\delta)\|\mb{t}^{K+1}-\mb{t}^*\|_G^2-\Gamma\gamma^K.
	\end{align}
\end{subequations}

\noindent (ii) We now assume that the induction hypothesis is true at the $k$th iteration, for some $k\geq K$, i.e.,
\begin{subequations}
	\begin{align}
	\left\|\mb{t}^{k-1}-\mb{t}^*\right\|_G^2&\leq F,\\
	\left\|\mb{t}^k-\mb{t}^*\right\|_G^2&\leq F,\label{induction2b}\\ 
	\|\mb{t}^{k}-\mb{t}^*\|_G^2&\geq(1+\delta)\|\mb{t}^{k+1}-\mb{t}^*\|_G^2-\Gamma\gamma^k\label{induction2c},
	\end{align}
\end{subequations}
and show that this set of equations also hold for $k+1$.

\noindent (iii) Given Eqs. \eqref{induction2b} and \eqref{induction2c}, we obtain $\|\mb{t}^{k+1}-\mb{t}^*\|_G^2\leq F$ by applying Proposition \ref{prop2}. Therefore, by combining $\|\mb{t}^{k+1}-\mb{t}^*\|_G^2\leq F$ with \eqref{induction2b}, we obtain that $\|\mb{t}^{k+1}-\mb{t}^*\|_G^2\geq(1+\delta)\|\mb{t}^{k+2}-\mb{t}^*\|_G^2-\Gamma\gamma^{k+1}$ by Proposition \ref{prop1}. To conclude, we obtain that 
\begin{subequations}
	\begin{align}
	\left\|\mb{t}^{k}-\mb{t}^*\right\|_G^2&\leq F,\\
	\left\|\mb{t}^{k+1}-\mb{t}^*\right\|_G^2&\leq F,\\ 
	\|\mb{t}^{k+1}-\mb{t}^*\|_G^2&\geq(1+\delta)\|\mb{t}^{k+2}-\mb{t}^*\|_G^2-\Gamma\gamma^{k+1}.
	\end{align}
\end{subequations}
hold for $k+1$. 

By induction, we conclude that this set of equations holds for all $k$, which completes the proof.
\end{proof}
\section{Numerical Experiments}\label{s6}
This section provides numerical experiments to study the convergence rate of DEXTRA for a least squares problem over a directed graph. The local objective functions in the least squares problems are strongly-convex. We compare the performance of DEXTRA with other algorithms suited to the case of directed graph: GP as defined by~\cite{opdirect_Nedic,opdirect_Tsianous,opdirect_Tsianous2,opdirect_Tsianous3}, and D-DGD as defined by~\cite{D-DGD}. Our second experiment verifies the existence of~$\alpha_{\min}$ and~$\alpha_{\max}$, such that the proper step-size~$\alpha$ is between~$\alpha_{\min}$ and $\alpha_{\max}$. We also consider various network topologies and weighting strategies to see how the eigenvalues of network graphs effect the interval of step-size,~$\alpha$. Convergence is studied in terms of the residual
\begin{align}\nonumber
re=\frac{1}{n}\sum_{i=1}^n\left\|\mb{z}_i^k-\mb{u}\right\|,
\end{align}
where~$\mb{u}$ is the optimal solution. The distributed least squares problem is described as follows.

Each agent owns a private objective function,~$\mb{h}_i=H_i\mb{x}+\mb{n}_i$, where~$\mb{h}_i\in\mbb{R}^{m_i}$ and~$H_i\in\mbb{R}^{m_i\times p}$ are measured data,~$\mb{x}\in\mbb{R}^p$ is unknown, and~$\mb{n}_i\in\mbb{R}^{m_i}$ is random noise. The goal is to estimate~$\mb{x}$, which we formulate as a distributed optimization problem solving
\begin{align}
\mbox{min }f(\mb{x})=\frac{1}{n}\sum_{i=1}^n\left\|H_i\mb{x}-\mb{h}_i\right\|.\nonumber
\end{align}
We consider the network topology as the digraph shown in Fig.~\ref{graph}. We first apply the local degree weighting strategy, i.e., to assign each agent itself and its out-neighbors equal weights according to the agent's own out-degree, i.e.,
\begin{align}\label{weight1}
a_{ij}=\frac{1}{|\mc{N}_j^{{\scriptsize \mbox{out}}}|},\qquad(i,j)\in\mc{E}.
\end{align}
According to this strategy, the corresponding network parameters are shown in Fig.~\ref{graph}. We now estimate the interval of appropriate step-sizes. We choose $L_f=\max_{i}\{2\lambda_{\max}(H_i^\top H_i)\}=0.14$, and $S_f=\min_{i}\{2\lambda_{\min}(H_i^\top H_i)\}=0.1$. We set $\eta=0.04<S_f/d^2$, and $\delta=0.1$. Note that $\eta$ and $\delta$ are estimated values. According to the calculation, we have $C_1=36.6$ and $C_2=5.6$. Therefore, we estimate that $\overline{\alpha}
=\frac{\eta\lambda_{min}\left(M+M^\top\right)}{2L_f^2(d_{\infty}^-d^-)^2}
=0.26$, and $\underline{\alpha}
<\frac{S_f/(2d^2)-\eta/2}{2C_2\delta}
=9.6\times10^{-4}$. We thus pick $\alpha=0.1\in[\underline{\alpha},~\overline{\alpha}]$ for the following experiments.
\begin{figure}[!htb]
	\begin{center}
\subfigure{\includegraphics[width=6.4cm]{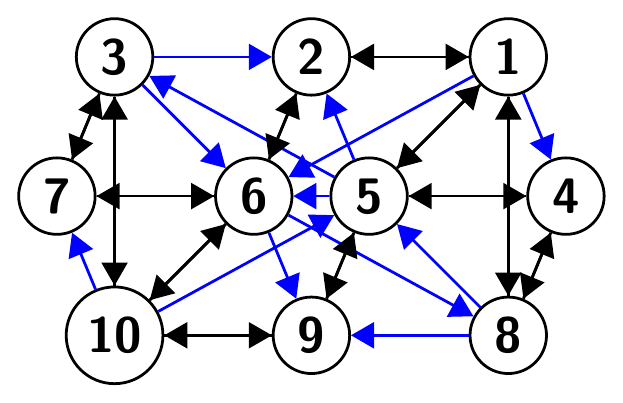}}
\subfigure{\includegraphics[width=6cm]{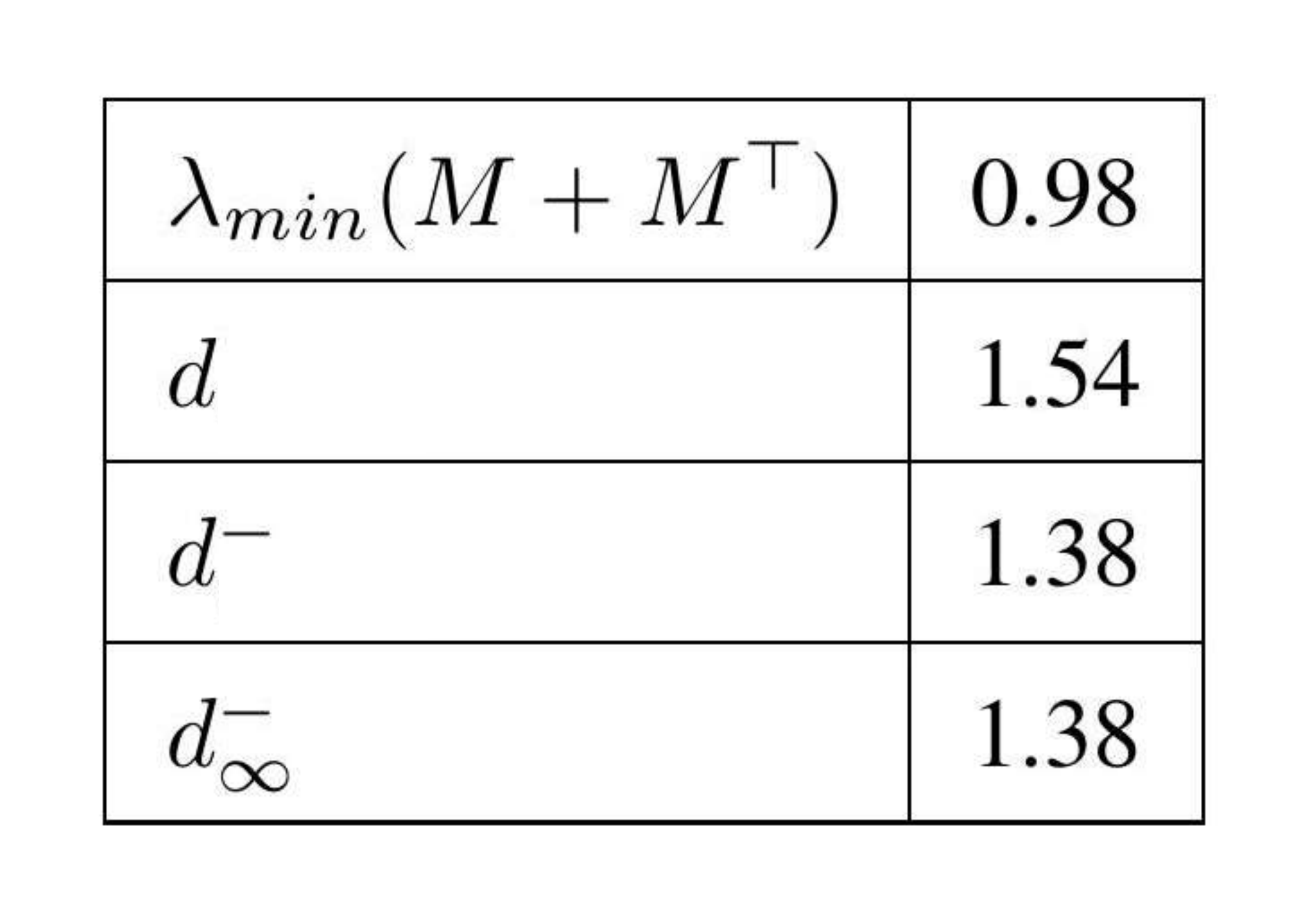}}
\caption{Strongly-connected but non-balanced digraphs and network parameters.}
\label{graph}
\end{center}
\end{figure}

\vspace{0.1cm}
Our first experiment compares several algorithms suited to directed graphs, illustrated in Fig.~\ref{graph}. The comparison of DEXTRA, GP, D-DGD and DGD with weighting matrix being row-stochastic is shown in Fig.~\ref{fig1}. In this experiment, we set~$\alpha=0.1$, which is in the range of our theoretical value calculated above. The convergence rate of DEXTRA is linear as stated in Section~\ref{s3}. G-P and D-DGD apply the same step-size,~$\alpha=\frac{\alpha}{\sqrt{k}}$. As a result, the convergence rate of both is sub-linear. We also consider the DGD algorithm, but with the weighting matrix being row-stochastic. The reason is that in a directed graph, it is impossible to construct a doubly-stochastic matrix. As expected, DGD with row-stochastic matrix does not converge to the exact optimal solution while other three algorithms are suited to directed graphs.
\begin{figure}[!h]
	\begin{center}
		\noindent
		\includegraphics[width=4in]{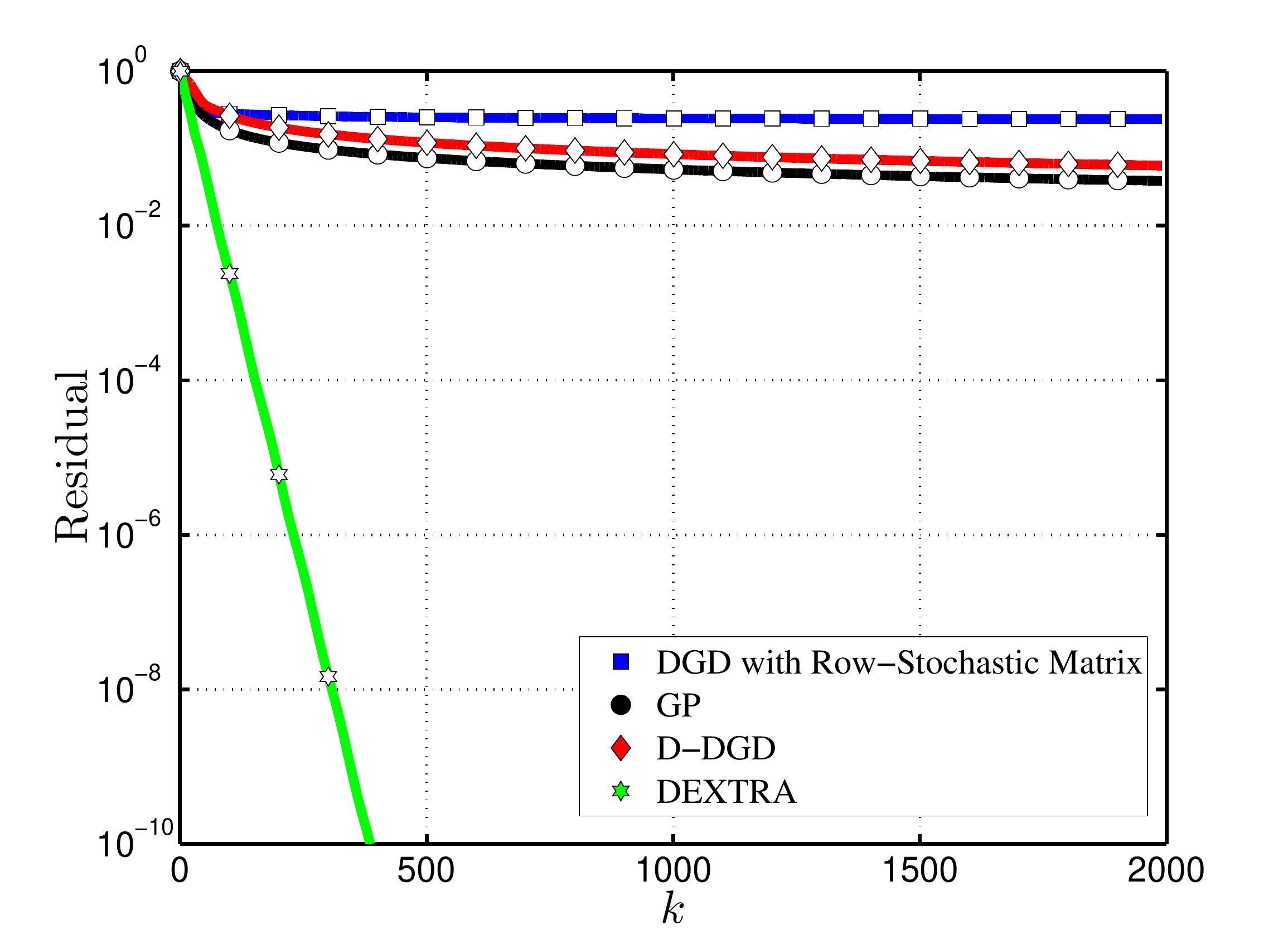}
		\caption{Comparison of different distributed optimization algorithms in a least squares problem. GP, D-DGD, and DEXTRA are proved to work when the network topology is described by digraphs. Moreover, DEXTRA has a linear convergence rate compared with GP and D-DGD.}\label{fig1}
	\end{center}
\end{figure}

According to the theoretical value of~$\underline{\alpha}$ and~$\overline{\alpha}$, we are able to set available step-size,~$\alpha\in[9.6\times10^{-4},0.26]$. In practice, this interval is much wider. Fig.~\ref{fig2} illustrates this fact. Numerical experiments show that~$\alpha_{\min}=0^+$ and~$\alpha_{\max}=0.447$. Though DEXTRA has a much wider range of step-size compared with the theoretical value, it still has a more restricted step-size compared with EXTRA, see~\cite{EXTRA}, where the value of step-size can be as low as any value close to zero in any network topology, i.e.,~$\alpha_{\min}=0$, as long as a \emph{symmetric} and doubly-stochastic matrix is applied in EXTRA. The relative smaller range of interval is due to the fact that the weighting matrix applied in DEXTRA can not be symmetric.
\begin{figure}[!h]
	\begin{center}
		\noindent
		\includegraphics[width=4in]{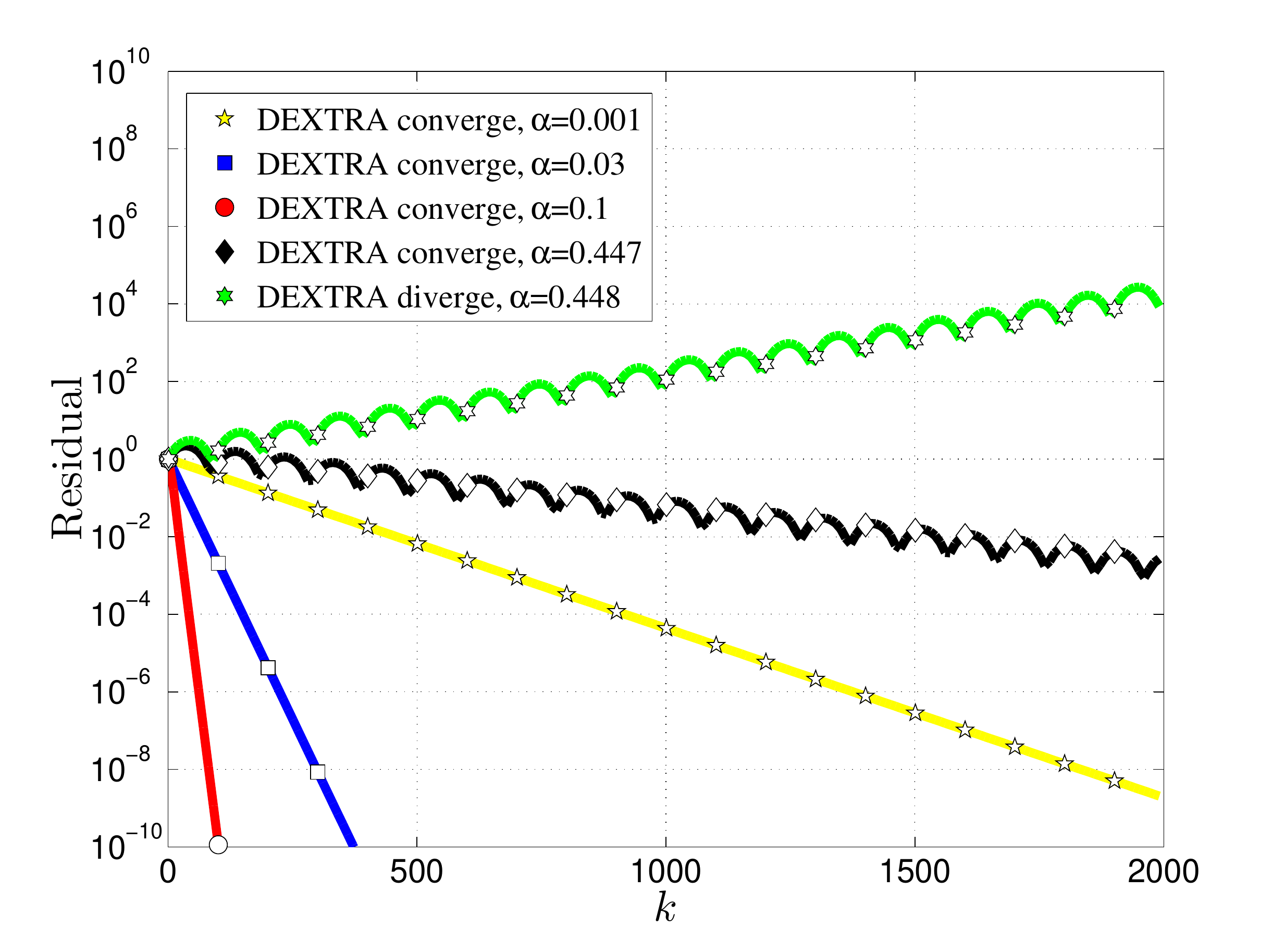}
		\caption{DEXTRA convergence w.r.t. different step-sizes. The practical range of step-size is much wider than theoretical bounds. In this case,~$\alpha\in[\alpha_{min}=0,\alpha_{\max}=0.447] $ while our theoretical bounds show that~$\alpha\in[\underline{\alpha}=5\times10^{-4},\overline{\alpha}=0.26]$.}\label{fig2}
	\end{center}
\end{figure}

The explicit representation of~$\overline{\alpha}$ and~$\underline{\alpha}$ given in Theorem~\ref{main_result} imply the way to increase the interval of step-size, i.e.,
$$\overline{\alpha}\varpropto\frac{\lambda_{min}(M+M^\top)}{(d_{\infty}^-d^-)^2},\qquad\underline{\alpha}\varpropto\frac{1}{(d^-d)^2}.$$ 
To increase $\overline{\alpha}$, we increase~$\frac{\lambda_{min}(M+M^\top)}{(d_{\infty}^-d^-)^2}$; to decrease $\underline{\alpha}$, we can decrease $\frac{1}{(d^-d)^2}$. Compared with applying the local degree weighting strategy, Eq.~\eqref{weight1}, as shown in Fig.~\ref{fig2}, we achieve a wider range of step-sizes by applying the constant weighting strategy, which can be expressed as
\begin{align}\label{weight2}
a_{ij}=\left\{
\begin{array}{rl}
1-0.01 (|\mc{N}_j^{{\scriptsize \mbox{out}}}|-1),&i=j,\\
0.01,&i\neq j,\quad i\in\mc{N}_j^{{\scriptsize \mbox{out}}},
\end{array}
\right.\quad\forall j,
\nonumber
\end{align}
This constant weighting strategy constructs a \textit{diagonal-dominant} weighting matrix, which increases~$\frac{\lambda_{min}(M+M^\top)}{(d_{\infty}^-d^-)^2}$.
\begin{figure}[!h]
	\begin{center}
		\noindent
		\includegraphics[width=4in]{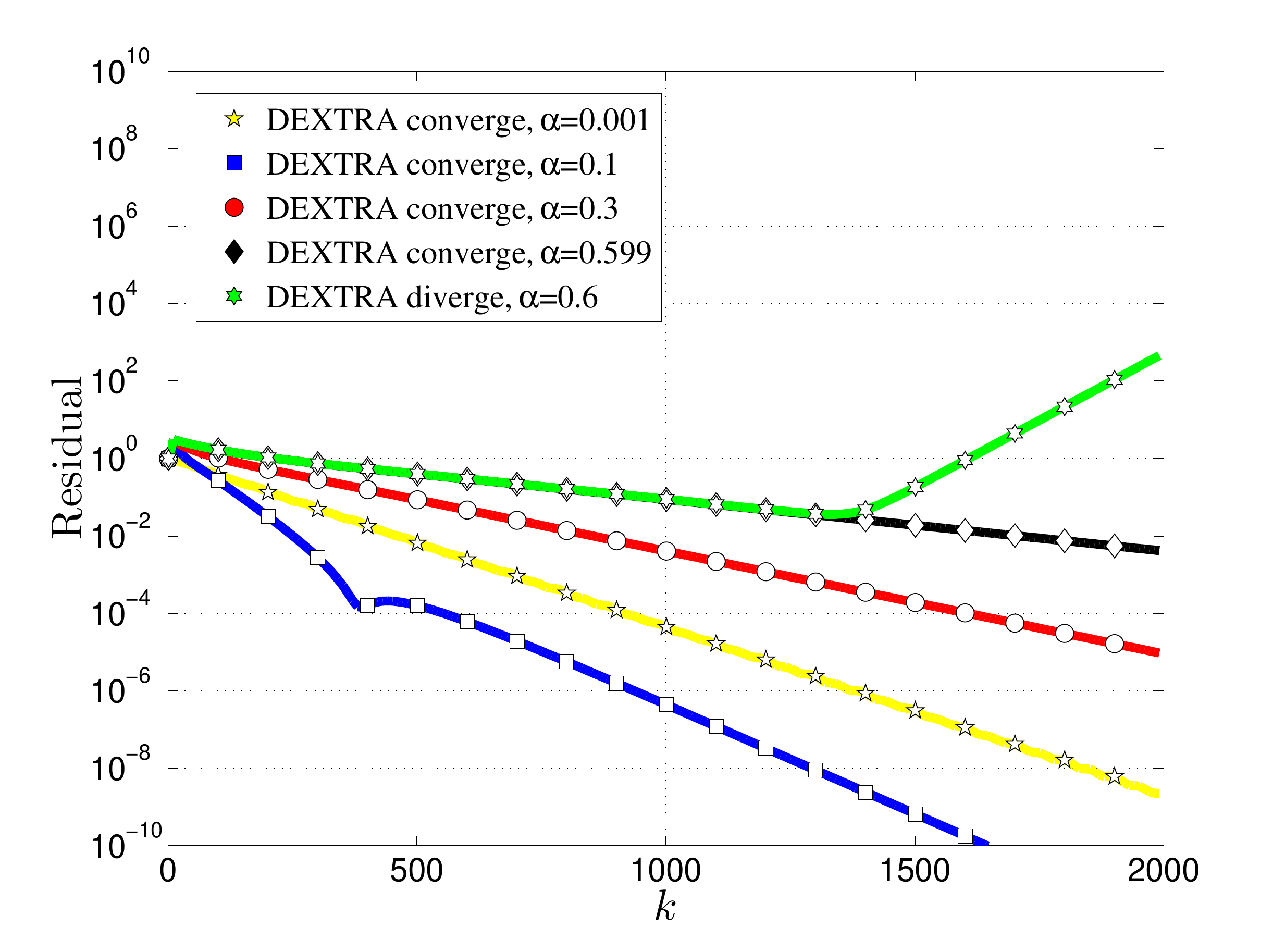}
		\caption{DEXTRA convergence with the weights in Eq.~\eqref{weight2}. A wider range of step-size is obtained due to the increase in $\frac{\lambda_{min}(M+M^\top)}{(d_{\infty}^-d^-)^2}$.}\label{fig2_1}
	\end{center}
\end{figure}
It may also be observed from Figs.~\ref{fig2} and~\ref{fig2_1} that the same step-size generates quiet different convergence speed when the weighting strategy changes. Comparing Figs.~\ref{fig2} and~\ref{fig2_1} when step-size~$\alpha=0.1$, DEXTRA with local degree weighting strategy converges much faster. 

\section{Conclusions}\label{s7}
In this paper, we introduce DEXTRA, a distributed algorithm to solve multi-agent optimization problems over \emph{directed} graphs. We have shown that DEXTRA succeeds in driving all agents to the same point, which is the exact optimal solution of the problem, given that the communication graph is strongly-connected and the objective functions are strongly-convex. Moreover, the algorithm converges at a linear rate~$O(\tau^k)$ for some constant,~$\tau<1$. Numerical experiments on a least squares problem show that DEXTRA is the fastest distributed algorithm among all algorithms applicable to directed graphs.

\appendices
\section{Proof of Proposition \ref{prop1}}\label{appprop1}
We first bound $\|\mb{z}^{k-1}\|$, $\|\mb{z}^{k}\|$, and $\|\mb{z}^{k+1}\|$. According to Lemma \ref{lem5}, we obtain that $\|\mb{z}^{k-1}\|\leq B$ and $\|\mb{z}^{k}\|\leq B$, since $\|\mb{t}^{k-1}-\mb{t}^*\|_G^2\leq F_1$ and $\|\mb{t}^{k}-\mb{t}^*\|_G^2\leq F_1$. By applying Lemma \ref{lem6}, we further obtain that $\|\mb{z}^{k+1}\|\leq C_1B$. Based on the boundedness of $\|\mb{z}^{k-1}\|$, $\|\mb{z}^{k}\|$, and $\|\mb{z}^{k+1}\|$, we start to prove the desired result.
By applying the restricted strong-convexity assumption, Eq.~\eqref{gradc}, it follows that 
	\begin{align}
	2\alpha S_f\left\|\mb{z}^{k+1}-\mb{z}^*\right\|^2\leq&2\alpha\left\langle D^\infty\left(\mb{z}^{k+1}-\mb{z}^*\right),\left(D^\infty\right)^{-1}\left[\nabla\mb{f}(\mb{z}^{k+1})-\nabla\mb{f}(\mb{z}^*)\right]\right\rangle,\nonumber\\
	=&2\alpha\left\langle D^\infty\mb{z}^{k+1}-D^{k+1}\mb{z}^{k+1},\left(D^\infty\right)^{-1}\left[\nabla\mb{f}(\mb{z}^{k+1})-\nabla\mb{f}(\mb{z}^*)\right]\right\rangle\nonumber\\
	&+2\alpha\left\langle D^{k+1}\mb{z}^{k+1}-D^\infty\mb{z}^*,\left(D^\infty\right)^{-1}\left[\nabla\mb{f}(\mb{z}^{k+1})-\nabla\mb{f}(\mb{z}^k)\right]\right\rangle\nonumber\\
	&+2\left\langle D^{k+1}\mb{z}^{k+1}-D^\infty\mb{z}^*,\left(D^\infty\right)^{-1}\alpha\left[\nabla\mb{f}(\mb{z}^{k})-\nabla\mb{f}(\mb{z}^*)\right]\right\rangle,\nonumber\\
	:=&s_1+s_2+s_3,
	\end{align}
where~$s_1$,~$s_2$,~$s_3$ denote each of RHS terms. We show the boundedness of $s_1$, $s_2$, and $s_3$ in Appendix~\ref{app1}. Next, it follows from Lemma~\ref{lem3}(c) that
\begin{align}
\left\|D^{k+1}\mb{z}^{k+1}-D^\infty\mb{z}^*\right\|^2\leq&2d^2\left\|\mb{z}^{k+1}-\mb{z}^*\right\|^2+2(nCB)^2\gamma^{2k}.\nonumber
\end{align}
Multiplying both sides of the preceding relation by~$\frac{\alpha S_f}{d^2}$ and combining it with Eq. (55), we obtain
\begin{align}\label{lem2_eq_2}
\frac{\alpha S_f}{d^2}\left\|D^{k+1}\mb{z}^{k+1}-D^\infty\mb{z}^*\right\|^2&\leq s_1+s_2+s_3\nonumber\\
&+\frac{2\alpha S_f(nCB)^2}{d^2}\gamma^{2k}.
\end{align}
By plugging the related bounds from Appendix~\ref{app1} ($s_1$ from Eq.~\eqref{ss1}, $s_2$ from Eq.~\eqref{ss2}, and $s_3$ from Eq.~\eqref{ss3_6}) in Eq.~\eqref{lem2_eq_2}, it follows that
\begin{align}\label{lem3_eq_3}
\left\|\mb{t}^k-\mb{t}^*\right\|_G^2-\left\|\mb{t}^{k+1}-\mb{t}^*\right\|_G^2
\geq&\left\|D^{k+1}\mb{z}^{k+1}-D^\infty\mb{z}^*\right\|^2_{\frac{\alpha}{2}\left[\frac{S_f}{d^2}-\eta-2\eta(d_\infty^-d^-L_f)^2\right]I_n-\frac{1}{\delta}I_n+Q}\nonumber\\
&+\left\|D^{k+1}\mb{z}^{k+1}-D^k\mb{z}^k\right\|^2_{M^\top-\frac{\delta}{2}MM^\top-\frac{\alpha (d_{\infty}^-d^-L_f)^2}{\eta}I_n}\nonumber\\
&-\alpha(nC)^2\left[\frac{C_1^2}{2\eta}+(d^-_\infty d^-L_f)^2\left(\eta+\frac{1}{\eta}\right)+\frac{ S_f}{d^2}\right]B^2\gamma^{k}\nonumber\\
&-\left\|\mb{q}^*-\mb{q}^{k+1}\right\|^2_{\frac{\delta}{2}NN^\top}.
\end{align}
In order to derive the relation that
\begin{align}
&\left\|\mb{t}^k-\mb{t}^*\right\|_G^2\geq(1+\delta)\left\|\mb{t}^{k+1}-\mb{t}^*\right\|_G^2-\Gamma\gamma^{k},
\end{align}
it is sufficient to show that the RHS of Eq.~\eqref{lem3_eq_3} is no less than $\delta\left\|\mb{t}^{k+1}-\mb{t}^*\right\|_G^2-\Gamma\gamma^{k}$. Recall the definition of $G$, $\mb{t}^k$, and $\mb{t}^*$ in Eq.~\eqref{t}, we have
\begin{align}\label{lem3_eq_3_1}
\delta\left\|\mb{t}^{k+1}-\mb{t}^*\right\|_G^2-\Gamma\gamma^{k}=&\left\|D^{k+1}\mb{z}^{k+1}-D^\infty\mb{z}^*\right\|^2_{\delta M^\top}+\left\|\mb{q}^*-\mb{q}^{k+1}\right\|^2_{\delta N}-\Gamma\gamma^k.
\end{align}
Comparing Eqs.~\eqref{lem3_eq_3} with \eqref{lem3_eq_3_1}, it is sufficient to prove that
\begin{align}\label{lem3_eq_4}
&\left\|D^{k+1}\mb{z}^{k+1}-D^\infty\mb{z}^*\right\|^2_{\frac{\alpha}{2}\left[\frac{S_f}{d^2}-\eta-2\eta(d_\infty^-d^-L_f)^2\right]I_n-\frac{1}{\delta}I_n+Q-\delta M^\top}\nonumber\\
&+\left\|D^{k+1}\mb{z}^{k+1}-D^k\mb{z}^k\right\|^2_{M^\top-\frac{\delta}{2}MM^\top-\frac{\alpha (d_{\infty}^-d^-L_f)^2}{\eta}I_n}\nonumber\\
& +\Gamma\gamma^k-\alpha(nC)^2\left[\frac{C_1^2}{2\eta}+(d^-_\infty d^-L_f)^2\left(\eta+\frac{1}{\eta}\right)+\frac{ S_f}{d^2}\right]B^2\gamma^{k}\nonumber\\
&\geq\left\|\mb{q}^*-\mb{q}^{k+1}\right\|^2_{\delta\left(\frac{NN^\top}{2}+N\right)}.
\end{align}
We next aim to bound $\|\mb{q}^*-\mb{q}^{k+1}\|^2_{\delta(\frac{NN^\top}{2}+N)}$ in terms of $\|D^{k+1}\mb{z}^{k+1}-D^\infty\mb{z}^*\|$ and $\|D^{k+1}\mb{z}^{k+1}-D^k\mb{z}^k\|$, such that it is easier to analyze Eq.~\eqref{lem3_eq_4}. From Lemma~\ref{lem1}, we have
\begin{align}\label{lem3_eq_6}
&\left\|\mb{q}^*-\mb{q}^{k+1}\right\|^2_{L^\top L}=\left\|L\left(\mb{q}^*-\mb{q}^{k+1}\right)\right\|^2,\nonumber\\
=&\Big{\|}R(D^{k+1}\mb{z}^{k+1}-D^\infty\mb{z}^*)+\alpha[\nabla\mb{f}(\mb{z}^{k+1})-\nabla\mb{f}(\mb{z}^{*})]\nonumber\\
&+\widetilde{A}(D^{k+1}\mb{z}^{k+1}-D^{k}\mb{z}^{k})+\alpha[\nabla\mb{f}(\mb{z}^k)-\nabla\mb{f}(\mb{z}^{k+1})]\Big{\|}^2,\nonumber\\
\leq&4\left(\left\|D^{k+1}\mb{z}^{k+1}-D^\infty\mb{z}^*\right\|^2_{R^\top R}+\alpha^2L_f^2\left\|\mb{z}^{k+1}-\mb{z}^*\right\|^2\right)\nonumber\\
&+4\left(\left\|D^{k+1}\mb{z}^{k+1}-D^k\mb{z}^k\right\|^2_{\widetilde{A}^\top\widetilde{A}}+\alpha^2L_f^2\left\|\mb{z}^{k+1}-\mb{z}^k\right\|^2\right),\nonumber\\
\leq&\left\|D^{k+1}\mb{z}^{k+1}-D^\infty\mb{z}^*\right\|^2_{4R^\top R+8(\alpha L_fd^-)^2I_n}+\left\|D^{k+1}\mb{z}^{k+1}-D^k\mb{z}^k\right\|^2_{4\widetilde{A}^\top\widetilde{A}+8(\alpha L_fd^-)^2I_n}\nonumber\\
&+24\left(\alpha nCd^-L_f\right)^2B^2\gamma^k.
\end{align}
Since that~$\lambda\left(\frac{N+N^\top}{2}\right)\geq0$, $\lambda\left(NN^\top\right)\geq0$, $\lambda\left(L^\top L\right)\geq0$, and $\lambda_{\min}\left(\frac{N+N^\top}{2}\right)=\lambda_{\min}\left(NN^\top\right)=\lambda_{\min}\left(L^\top L\right)=0$ with the same corresponding eigenvector, we have
\begin{align}\label{lem3_eq_7}
\left\|\mb{q}^*-\mb{q}^{k+1}\right\|^2_{\delta\left(\frac{NN^\top}{2}+N\right)}\leq\delta C_2\left\|\mb{q}^*-\mb{q}^{k+1}\right\|^2_{L^\top L},
\end{align}%
where $C_2$ is the constant defined in Theorem \ref{main_result}. By combining Eqs.~\eqref{lem3_eq_6} with \eqref{lem3_eq_7}, it follows that
\begin{align}\label{lem3_eq_7_1}
\left\|\mb{q}^*-\mb{q}^{k+1}\right\|^2_{\delta\left(\frac{NN^\top}{2}+N\right)}&\leq\delta C_2\left\|\mb{q}^*-\mb{q}^{k+1}\right\|^2_{L^\top L},\nonumber\\
&\leq\left\|D^{k+1}\mb{z}^{k+1}-D^\infty\mb{z}^*\right\|^2_{\delta C_2\left(4R^\top R+8(\alpha L_fd^-)^2I_n\right)}\nonumber\\
&+\left\|D^{k+1}\mb{z}^{k+1}-D^k\mb{z}^k\right\|^2_{\delta C_2\left(4\widetilde{A}^\top\widetilde{A}+8(\alpha L_fd^-)^2I_n\right)}\nonumber\\
&+24\delta C_2\left(\alpha nCd^-L_f\right)^2B^2\gamma^k.
\end{align}
Consider Eq.~\eqref{lem3_eq_4}, together with \eqref{lem3_eq_7_1}. Let
\begin{align}
\Gamma=&C_3B^2,
\end{align}
where $C_3$ is the constant defined in Theorem \ref{main_result}, such that all ``$\gamma^k$ items'' in Eqs.~\eqref{lem3_eq_4} and \eqref{lem3_eq_7_1} can be canceled out. In order to prove Eq.~\eqref{lem3_eq_4}, it is sufficient to show that the LHS of Eq.~\eqref{lem3_eq_4} is no less than the RHS of Eq.~\eqref{lem3_eq_7_1}, i.e.,
\begin{align}\label{lem3_eq_8}
&\left\|D^{k+1}\mb{z}^{k+1}-D^\infty\mb{z}^*\right\|^2_{\frac{\alpha}{2}\left[\frac{S_f}{d^2}-\eta-2\eta(d_\infty^-d^-L_f)^2\right]I_n-\frac{1}{\delta}I_n+Q-\delta M^\top}\nonumber\\
&+\left\|D^{k+1}\mb{z}^{k+1}-D^k\mb{z}^k\right\|^2_{M^\top-\frac{\delta}{2}MM^\top-\frac{\alpha (d_{\infty}^-d^-L_f)^2}{\eta}I_n}\nonumber\\
&\geq\left\|D^{k+1}\mb{z}^{k+1}-D^\infty\mb{z}^*\right\|^2_{\delta C_2\left(4R^\top R+8(\alpha L_fd^-)^2I_n\right)}\nonumber\\
&+\left\|D^{k+1}\mb{z}^{k+1}-D^ks\mb{z}^k\right\|^2_{\delta C_2\left(4\widetilde{A}^\top\widetilde{A}+8(\alpha L_fd^-)^2I_n\right)}.
\end{align}
To satisfy Eq.~\eqref{lem3_eq_8}, it is sufficient to have the following two relations hold simultaneously,
\begin{subequations}
	\begin{align}
	&\frac{\alpha}{2}\left[\frac{S_f}{d^2}-\eta-2\eta(d_\infty^-d^-L_f)^2\right]-\frac{1}{\delta}-\delta\lambda_{\max}\left(\frac{M+M^\top}{2}\right)
	\geq\delta C_2\left[4\lambda_{\max}\left(R^\top R\right)+8(\alpha L_fd^-)^2\right],\label{c1}\\
	&\lambda_{\min}\left(\frac{M+M^\top}{2}\right)-\frac{\delta}{2}\lambda_{\max}\left(MM^\top\right)-\frac{\alpha (d_{\infty}^-d^-L_f)^2}{\eta}
	\geq\delta C_2\left[4\lambda_{\max}\left(\widetilde{A}^\top\widetilde{A}\right)+8(\alpha L_fd^-)^2\right].\label{c2}
	\end{align}
\end{subequations}
where in Eq.~\eqref{c1} we ignore the term $\frac{\lambda_{\min}\left(Q+Q^\top\right)}{2}$ due to $\lambda_{\min}\left(Q+Q^\top\right)=0$. Recall the definition
\begin{align}
C_4&=8C_2\left(L_fd^-\right)^2,\\
C_5&=\lambda_{\max}\left(\frac{M+M^\top}{2}\right)+4C_2\lambda_{\max}\left(R^\top R \right),\\
C_6&=\frac{\frac{S_f}{d^2}-\eta-2\eta(d_\infty^-d^-L_f)^2}{2},\\
\Delta&=C_6^2-4C_4\delta\left(\frac{1}{\delta}+C_5\delta\right).
\end{align}
The solution of step-size, $\alpha$, satisfying Eq.~\eqref{c1}, is
\begin{align}\label{c4}
\frac{C_6-\sqrt{\Delta}}{2C_4\delta}\leq\alpha\leq\frac{C_6+\sqrt{\Delta}}{2C_4\delta},
\end{align}
where we set
\begin{align}
\eta&<\frac{S_f}{d^2(1+(d_\infty^-d^-L_f)^2)},
\end{align}
to ensure the solution of $\alpha$ contains positive values. In order to have $\delta>0$ in Eq.~\eqref{c2}, the step-size,~$\alpha$, is sufficient to satisfy
\begin{align}\label{c3}
\alpha\leq\frac{\eta\lambda_{min}\left(M+M^\top\right)}{2(d_{\infty}^-d^-L_f)^2}.
\end{align}
By combining Eqs.~\eqref{c4} with \eqref{c3}, we conclude it is sufficient to set the step-size $\alpha\in[\underline{\alpha},\overline{\alpha}]$,
where
\begin{align}
\underline{\alpha}\triangleq\frac{C_6-\sqrt{\triangle}}{2C_4\delta},
\end{align}
and
\begin{align}
\overline{\alpha}\triangleq\min\left\{\frac{\eta\lambda_{min}\left(M+M^\top\right)}{2(d_{\infty}^-d^-L_f)^2},\frac{C_6+\sqrt{\triangle}}{2C_4\delta}\right\},
\end{align}
to establish the desired result, i.e.,
\begin{align}\label{con}
\|\mb{t}^k-\mb{t}^*\|_G^2\geq(1+\delta)\|\mb{t}^{k+1}-\mb{t}^*\|_G^2-\Gamma\gamma^k.
\end{align}
Finally, we bound the constant $\delta$, which reflecting how fast $\left\|\mb{t}^{k+1}-\mb{t}^*\right\|^2_G$ converges. Recall the definition of $C_7$
\begin{align}
C_7=\frac{1}{2}\lambda_{\max}\left(MM^\top\right)+4C_2\lambda_{\max}\left(\widetilde{A}^\top \widetilde{A} \right).
\end{align}
To have $\alpha$'s solution  of Eq.~\eqref{c2} contains positive values, we need to set
\begin{align}
\delta<\frac{\lambda_{\min}\left(M+M^\top\right)}{2C_7}.
\end{align}

\section{Proof of Proposition \ref{prop2}}\label{appprop2}
Since we have $\|\mb{t}^{k}-\mb{t}^*\|_G^2\leq F_2$, and $\|\mb{t}^k-\mb{t}^*\|_G^2\geq(1+\delta)\|\mb{t}^{k+1}-\mb{t}^*\|_G^2-\Gamma\gamma^k$, it follows that
\begin{align}\label{con1}
\left\|\mb{t}^{k+1}-\mb{t}^*\right\|^2_G&\leq\frac{\left\|\mb{t}^{k}-\mb{t}^*\right\|^2_G}{1+\delta}+\frac{\Gamma\gamma^k}{1+\delta},\nonumber\\
&\leq\frac{F_2}{1+\delta}+\frac{\Gamma\gamma^k}{1+\delta}.
\end{align}
Given the definition of $K$ in Eq.~\eqref{bigK}, it follows that for $k\geq K$
\begin{align}
\gamma^k\leq\frac{\delta\lambda_{\min}\left(\frac{M+M^\top}{2}\right)B^2}{2\alpha(d^-)^2C_3B^2}\leq\frac{\delta F_2}{\Gamma},
\end{align}
where the second inequality follows with the definition of $\Gamma$, and $F$ in Eqs.~\eqref{Gammashort} and \eqref{pf_asp}.
Therefore, we obtain that
\begin{align}
\left\|\mb{t}^{k+1}-\mb{t}^*\right\|^2_G\leq\frac{F_2}{1+\delta}+\frac{\delta F_2}{1+\delta}=F_2.
\end{align}
\section{Bounding $s_1,s_2$ and $s_3$}\label{app1}
{\it Bounding $s_1$:} By using $2\langle\mb{a},\mb{b}\rangle\leq\eta\|\mb{a}\|^2+\frac{1}{\eta}\|\mb{b}\|^2$ for any appropriate vectors $\mb{a,b}$, and a positive $\eta$, we obtain that 
\begin{align}
s_1\leq\frac{\alpha}{\eta}\left\|D^\infty-D^{K+1}\right\|^2\left\|\mb{z}^{K+1}\right\|^2+\alpha\eta(d^-_\infty L_f)^2\left\|\mb{z}^{K+1}-\mb{z}^*\right\|^2.
\end{align}
\noindent It follows $\left\|D^\infty-D^{K+1}\right\|\leq nC\gamma^K$ as shown in Eq.~\eqref{DkDinfty}, and $\|\mb{z}^{K+1}\|^2\leq C_1^2B^2$ as shown in Eq.~\eqref{zk+1}. The term $\|\mb{z}^{K+1}-\mb{z}^*\|$ can be bounded with applying Lemma \ref{lem3}(b). Therefore,
\begin{align}\label{ss1}
s_1\leq&\alpha(nC)^2\left[\frac{C_1^2}{\eta}+2\eta(d^-_\infty d^-L_f)^2\right]B^2\gamma^{2K}+2\alpha\eta(d^-_\infty d^-L_f)^2\left\|D^{K+1}\mb{z}^{K+1}-D^\infty\mb{z}^*\right\|^2.
\end{align}
{\it Bounding $s_2$:} Similarly, we use Lemma~\ref{lem3}(a) to obtain 
\begin{align}
s_2\leq&\alpha\eta\left\|D^{K+1}\mb{z}^{K+1}-D^\infty\mb{z}^*\right\|^2+\frac{\alpha(d_\infty^- L_f)^2}{\eta}\left\|\mb{z}^{K+1}-\mb{z}^k\right\|^2\nonumber,\\
\leq&\alpha\eta\left\|D^{K+1}\mb{z}^{K+1}-D^\infty\mb{z}^*\right\|^2+\frac{2\alpha (nCd_{\infty}^-d^-L_f)^2B^2}{\eta}\gamma^{2K}\nonumber\\
&+\frac{2\alpha(d_{\infty}^-d^-L_f)^2}{\eta}\left\|D^{k+1}\mb{z}^{k+1}-D^k\mb{z}^k\right\|^2.\label{ss2}
\end{align}

{\it Bounding $s_3$:} We rearrange Eq.~\eqref{lem1_eq} in Lemma~\ref{lem1} as follow,
\begin{align}
\alpha\left[\nabla\mb{f}(\mb{z}^k)-\nabla\mb{f}(\mb{z}^{*})\right]=R\left(D^{k+1}\mb{z}^{k+1}-D^\infty\mb{z}^*\right)+\widetilde{A}\left(D^{k+1}\mb{z}^{k+1}-D^{k}\mb{z}^{k}\right)+L\left(\mb{q}^{k+1}-\mb{q}^*\right).
\end{align}
By substituting $\alpha[\nabla\mb{f}(\mb{z}^k)-\nabla\mb{f}(\mb{z}^{*})]$ in $s_3$ with the representation in the preceding relation, we represent $s_3$ as
\begin{align}\label{ss3}
s_3=&\left\|D^{K+1}\mb{z}^{K+1}-D^\infty\mb{z}^*\right\|^2_{-2Q}+2\left\langle D^{K+1}\mb{z}^{K+1}-D^\infty\mb{z}^*,M\left(D^{K}\mb{z}^{K}-D^{K+1}\mb{z}^{K+1}\right)\right\rangle\nonumber\\
&+2\left\langle D^{K+1}\mb{z}^{K+1}-D^\infty\mb{z}^*,N\left(\mb{q}^*-\mb{q}^{k+1}\right)\right\rangle,\nonumber\\
:=&s_{3a}+s_{3b}+s_{3c},
\end{align}
where $s_{3b}$ is equivalent to
\begin{align}
s_{3b}=2\left\langle D^{K+1}\mb{z}^{K+1}-D^{K}\mb{z}^{K},M^\top\left(D^\infty\mb{z}^*-D^{K+1}\mb{z}^{K+1}\right)\right\rangle,\nonumber
\end{align}
and $s_{3c}$ can be simplified as
\begin{align}
s_{3c}&=2\left\langle D^{K+1}\mb{z}^{K+1},N\left(\mb{q}^*-\mb{q}^{K+1}\right)\right\rangle\nonumber\\
&=2\left\langle \mb{q}^{K+1}-\mb{q}^K,N\left(\mb{q}^*-\mb{q}^{K+1}\right)\right\rangle.\nonumber
\end{align}
The first equality in the preceding relation holds due to the fact that $N^\top D^{\infty}\mb{z}^*=\mb{0}_n$ and the second equality follows from the definition of $\mb{q}^k$, see Eq.~\eqref{q}. By substituting the representation of $s_{3b}$ and $s_{3c}$ into~\eqref{ss3}, and recalling the definition of~$\mb{t}^k$,~$\mb{t}^*$,~$G$ in Eq.~\eqref{t}, we simplify the representation of $s_3$,
\begin{align}\label{ss3_2}
s_3=&\left\|D^{K+1}\mb{z}^{K+1}-D^\infty\mb{z}^*\right\|^2_{-2Q}+2\left\langle \mb{t}^{K+1}-\mb{t}^K,G\left(\mb{t}^{*}-\mb{t}^{K+1}\right)\right\rangle.
\end{align}
With the basic rule
	\begin{align}
	&\left\langle \mb{t}^{K+1}-\mb{t}^K,G\left(\mb{t}^{*}-\mb{t}^{K+1}\right)\right\rangle+\left\langle G\left(\mb{t}^{K+1}-\mb{t}^K\right),\mb{t}^{*}-\mb{t}^{K+1}\right\rangle\nonumber\\
	&=\left\|\mb{t}^K-\mb{t}^*\right\|_G^2-\left\|\mb{t}^{K+1}-\mb{t}^*\right\|_G^2-\left\|\mb{t}^{K+1}-\mb{t}^{K}\right\|_G^2,
	\end{align}
We obtain that 
	\begin{align}\label{ss3_3}
	s_3=&\left\|D^{K+1}\mb{z}^{K+1}-D^\infty\mb{z}^*\right\|^2_{-2Q}+2\left\|\mb{t}^K-\mb{t}^*\right\|_G^2-2\left\|\mb{t}^{K+1}-\mb{t}^*\right\|_G^2-2\left\|\mb{t}^{K+1}-\mb{t}^{K}\right\|_G^2\nonumber\\
	&-2\left\langle G\left(\mb{t}^{K+1}-\mb{t}^K\right),\mb{t}^{*}-\mb{t}^{K+1}\right\rangle.
	\end{align}
We analyze the last two terms in Eq.~\eqref{ss3_3}:
\begin{align}\label{ss3_4}
-2\left\|\mb{t}^{K+1}-\mb{t}^{K}\right\|_G^2
\leq&-2\left\|D^{K+1}\mb{z}^{K+1}-D^K\mb{z}^{K}\right\|_{M^\top}^2,
\end{align}
where the inequality holds due to $N$-matrix norm is nonnegative, and
\begin{align}\label{ss3_5}
-&2\left\langle G\left(\mb{t}^{K+1}-\mb{t}^K\right),\mb{t}^{*}-\mb{t}^{K+1}\right\rangle\nonumber\\
=&-2\left\langle M^\top\left(D^{K+1}\mb{z}^{K+1}-D^{K}\mb{z}^{K}\right),D^\infty\mb{z}^*-D^{K+1}\mb{z}^{K+1}\right\rangle\nonumber\\
&-2\left\langle D^{K+1}\mb{z}^{K+1}-D^\infty\mb{z}^*,N^\top\left(\mb{q}^*-\mb{q}^{K+1}\right)\right\rangle\nonumber,\\
\leq&\delta\left\|D^{K+1}\mb{z}^{K+1}-D^K\mb{z}^K\right\|^2_{MM^\top}+\delta\left\|\mb{q}^*-\mb{q}^{K+1}\right\|^2_{NN^\top}\nonumber\\
&+\frac{2}{\delta}\left\|D^{K+1}\mb{z}^{K+1}-D^\infty\mb{z}^*\right\|^2,
\end{align}
for some~$\delta>0$. By substituting Eqs.~\eqref{ss3_4} and \eqref{ss3_5} into Eq.~\eqref{ss3_3}, we obtain that
\begin{align}\label{ss3_6}
s_3\leq&2\left\|\mb{t}^K-\mb{t}^*\right\|_G^2-2\left\|\mb{t}^{K+1}-\mb{t}^*\right\|_G^2+\left\|\mb{q}^*-\mb{q}^{K+1}\right\|^2_{\delta NN^\top}\nonumber\\
&+\left\|D^{K+1}\mb{z}^{K+1}-D^\infty\mb{z}^*\right\|^2_{\frac{2}{\delta}I_n-2Q}\nonumber\\
&+\left\|D^{K+1}\mb{z}^{K+1}-D^K\mb{z}^K\right\|^2_{-2M^\top+\delta MM^\top}.
\end{align}
{
	\small
	\bibliographystyle{IEEEbib}
	\bibliography{sample}
}

\end{document}